\newtheorem{theorem}{Theorem}
\newtheorem{lemma}[theorem]{Lemma}
\newtheorem{proposition}{Proposition}
\newtheorem{question}{Question}
\newtheorem{claim}{Claim}
\newtheorem{corollary}{Corollary}
\newcommand{\ceil}[1]{\left\lceil #1 \right\rceil}
\tikzstyle{Filled in}=[fill=black, draw=black, shape=circle, scale=0.7]
\tikzstyle{Hollow}=[fill=white, draw=black, shape=circle, tikzit draw=black, tikzit fill=white, scale=0.55]
\tikzstyle{Hollow7}=[fill=white, draw=black, shape=circle, tikzit draw=black, tikzit fill=white, scale=0.7]
\tikzstyle{Forced}=[fill={rgb,255: red,250; green,51; blue,51}, draw=black, shape=circle, scale=0.55]
\tikzstyle{ZFS}=[fill=blue, draw=black, shape=circle, scale=0.55]
\tikzstyle{ZFS_Purple}=[fill={rgb,255: red,126; green,1; blue,157}, draw=black, shape=circle, scale=0.55]
\tikzstyle{ZFS7}=[fill=blue, draw=black, shape=circle, scale=0.7]
\tikzstyle{new style 0}=[fill=lightgray, draw=black, shape=circle, scale=0.4]
\tikzstyle{Trunk}=[fill={rgb,255: red,165; green,42; blue,42}, draw=black, shape=circle, scale=0.7]
\tikzstyle{new style 1}=[fill={rgb,255: red,1; green,138; blue,92}, draw={rgb,255: red,1; green,138; blue,92}, shape=circle, scale]
\tikzstyle{Tree}=[fill={rgb,255: red,1; green,138; blue,92}, draw=black, shape=circle, scale=0.7]
\tikzstyle{Boundary}=[fill={rgb,255: red,222; green,0; blue,222}, draw=black, shape=circle, scale=0.7]
\tikzstyle{Forced Edge}=[-, draw={rgb,255: red,255; green,62; blue,62}, fill={rgb,255: red,255; green,69; blue,69}]
\tikzstyle{Bold}=[-, fill=black]
\tikzstyle{Bold2}=[-, fill=black, line width = 1.3pt]
\tikzstyle{Dual}=[-, fill={rgb,255: red,1; green,138; blue,92}, draw={rgb,255: red,1; green,138; blue,92}]
\tikzstyle{Trunk 2}=[-, fill={rgb,255: red,165; green,42; blue,42}, draw={rgb,255: red,165; green,42; blue,42}]
\title{Zero Forcing on 2-connected Outerplanar Graphs}
\author{Nolan Ison\footnote{Department of Mathematics, Brigham Young University, Provo, UT, \emph{nolan.ison@mathematics.byu.edu}}, Mark Kempton\footnote{Department of Mathematics, Brigham Young University, Provo, UT, \emph{mkempton@mathematics.byu.edu}}, Franklin Kenter\footnote{Department of Mathematics, United States Naval Academy, Annapolis, MD, \emph{kenter@usna.edu}}}
\date{}
\begin{document}

\maketitle


\begin{abstract}
    We determine upper and lower bounds on the zero forcing number of 2-connected outerplanar graphs in terms of the structure of the weak dual. We show that the upper bound is always at most half the number of vertices of the graph. This work generalizes work of Hern\'andez, Ranilla and Ranilla-Cortina in \cite{hernandez2019zero} who proved a similar result for maximal outerplanar graphs.
\end{abstract}

\noindent {\bf Keywords:} zero forcing, outerplanar graph, 2-connected  

\noindent {\bf AMS subject classification:} 05C57, 05C10, 05C50, 05C40.

\section{Introduction}

In this paper, all graphs are assumed to be simple, undirected graphs. We will work almost exclusively with 2-connected outerplanar graphs. A graph is \textit{2-connected} if for any $v \in V(G)$, $G - v$ is connected. A graph is \textit{outerplanar} if it is planar and every vertex lies on the outer face of the graph. A way to think of 2-connected outerplanar graphs is by taking a tree and blowing each vertex up into a cycle where neighboring cycles share exactly one edge.

The zero forcing paramater $Z(G)$ was introduced in \cite{work2008zero} to bound the maximum nullity of a simple graph. Let all the vertices in $G$ be either colored blue or white. The \textit{color changing rule} states that if a blue vertex $u$ has exactly one white neighbor $v$, then change the color of $v$ to blue. We say that $u$ forces (or infects) $v$. A \textit{zero forcing set} (ZFS) of $G$ is a set of blue vertices that force the entire graph to be blue. The \textit{zero forcing number} of $G$, denoted as $Z(G)$, is the minimum size of a zero forcing set of $G$. 

 In relation to topological graph theory,  the Colin de Verdi\`ere parameter, $\mu(G)$, is a variant of maximum nullity that perfectly characterizes disjoint paths ($\mu = 1$), outerplanar graphs ($\mu = 2$), planar graphs ($\mu = 3$), and linklessly embeddable graphs ($\mu = 4$) \cite{CDV1}. However, to date, there is little known about zero forcing in the context of topological graph theory.  Butler and Young showed that if $Z(G)=2$ and $G$ is connected, then $G$ is outerplanar \cite{butlerthrot}. For maximal outerplanar graphs (i.e. ``triangluations''), Hern\'andez, Ranilla and Ranilla-Cortina showed that  \begin{equation}\label{eq:maximal} Z(G) \leq 2\ceil{\frac{n_2 + c -1}{2}}\end{equation}
 where $n_2$ is the number of vertices of degree 2 and $c$ is the number of components in a certain subgraph of the weak dual of $G$ \cite{hernandez2019zero}. In our main result, we generalize this result to any 2-connected outerplanar graph, and show that $Z(G) \le \frac{n}{2}$  (Theorem 2) for every such graph. We note that the quantity from equation \ref{eq:maximal} will not apply to all 2-connected outerplanar graphs, as we see in Figure \ref{fig:max_and_not_max}. We will present a strategy to zero force any 2-connected outerplanar graph that is related to the strategy presented in \cite{hernandez2019zero}, but is more complicated to accommodate various subtleties arising from general 2-connected outerplanar graphs. Furthermore, we will demonstrate that our bounds are tight. We also provide evidence that a similar bound may hold for 3-connected planar graphs as well (Section \ref{sec:conclusion}). 


\begin{figure}[!h]
    \centering
    \begin{minipage}{.45\textwidth}
        \centering
        \begin{tikzpicture}[scale=0.4]
	\begin{pgfonlayer}{nodelayer}
		\node [style=none] (2) at (0, 2.25) {};
		\node [style=Hollow] (8) at (0, 2.25) {};
		\node [style=Hollow] (9) at (6, -1) {};
		\node [style=Hollow] (12) at (-4, -4.5) {};
		\node [style=Hollow] (17) at (-2, 5.25) {};
		\node [style=ZFS] (19) at (0, 2.25) {};
		\node [style=ZFS] (20) at (4, -4.5) {};
		\node [style=ZFS] (21) at (0, -4.5) {};
		\node [style=ZFS] (22) at (-4, 2.25) {};
		\node [style=Hollow] (23) at (0, 2.25) {};
		\node [style=ZFS] (24) at (4, 2.25) {};
		\node [style=Hollow] (28) at (2, -1) {};
		\node [style=ZFS] (30) at (2, 5.25) {};
		\node [style=ZFS] (31) at (-6, -1) {};
		\node [style=Hollow] (32) at (0, 2.25) {};
		\node [style=Hollow] (34) at (-2, -1) {};
	\end{pgfonlayer}
	\begin{pgfonlayer}{edgelayer}
		\draw (12) to (21);
		\draw (22) to (17);
		\draw (24) to (28);
		\draw (24) to (9);
		\draw (9) to (28);
		\draw (30) to (24);
		\draw (31) to (22);
		\draw (30) to (23);
		\draw (23) to (28);
		\draw (17) to (32);
		\draw (32) to (34);
		\draw (22) to (34);
		\draw (22) to (32);
		\draw (34) to (21);
		\draw (34) to (28);
		\draw (28) to (20);
		\draw (28) to (21);
		\draw (12) to (34);
		\draw (31) to (34);
		\draw (24) to (32);
		\draw (21) to (20);
	\end{pgfonlayer}
\end{tikzpicture}
        \subcaption{This is a maximal outerplanar graph with a ZFS in blue.}
    \end{minipage}%
    \begin{minipage}{0.1\textwidth}
        \hspace{0mm}
    \end{minipage}%
    \begin{minipage}{.45\textwidth}
        \centering
        \begin{tikzpicture}[scale=0.4]
	\begin{pgfonlayer}{nodelayer}
		\node [style=none] (5) at (-2, -3.25) {};
		\node [style=none] (6) at (6, 3.25) {};
		\node [style=none] (9) at (2, 3.25) {};
		\node [style=none] (13) at (-6, 3.25) {};
		\node [style=none] (16) at (-4, 0) {};
		\node [style=none] (18) at (2, -3.25) {};
		\node [style=none] (19) at (-4, -6.5) {};
		\node [style=none] (21) at (0, -6.5) {};
		\node [style=Hollow] (23) at (2, 3.25) {};
		\node [style=Hollow] (26) at (8, 0) {};
		\node [style=Hollow] (28) at (2, -3.25) {};
		\node [style=Hollow] (29) at (0, -6.5) {};
		\node [style=Hollow] (31) at (-4, -6.5) {};
		\node [style=Hollow] (32) at (-2, -3.25) {};
		\node [style=Hollow] (37) at (-6, 3.25) {};
		\node [style=Hollow] (38) at (-4, 0) {};
		\node [style=Hollow] (40) at (6, 3.25) {};
		\node [style=Hollow] (41) at (-4, 6.25) {};
		\node [style=Hollow] (46) at (4, -6.5) {};
		\node [style=ZFS] (48) at (2, 3.25) {};
		\node [style=ZFS] (49) at (4, -6.5) {};
		\node [style=ZFS] (50) at (0, -6.5) {};
		\node [style=ZFS] (52) at (-6, 3.25) {};
		\node [style=Hollow] (53) at (2, 3.25) {};
		\node [style=ZFS] (54) at (6, 3.25) {};
		\node [style=Hollow] (64) at (0, 3.25) {};
		\node [style=Hollow] (65) at (-2, 3.25) {};
		\node [style=Hollow] (67) at (-3, -1.625) {};
		\node [style=Hollow] (68) at (4, 0) {};
		\node [style=Hollow] (69) at (3, -1.6) {};
		\node [style=ZFS] (70) at (4, 6.25) {};
		\node [style=ZFS] (71) at (-8, 0) {};
	\end{pgfonlayer}
	\begin{pgfonlayer}{edgelayer}
		\draw (13.center) to (16.center);
		\draw (9.center) to (6.center);
		\draw (18.center) to (21.center);
		\draw (19.center) to (5.center);
		\draw (28) to (46);
		\draw (46) to (29);
		\draw (65) to (64);
		\draw (67) to (32);
		\draw (68) to (69);
		\draw (31) to (50);
		\draw (32) to (50);
		\draw (65) to (52);
		\draw (41) to (65);
		\draw (52) to (41);
		\draw (54) to (68);
		\draw (54) to (26);
		\draw (26) to (68);
		\draw (70) to (54);
		\draw (71) to (52);
		\draw (65) to (38);
		\draw (38) to (67);
		\draw (38) to (71);
		\draw (70) to (53);
		\draw (53) to (64);
		\draw (53) to (68);
		\draw (69) to (28);
		\draw (28) to (32);
	\end{pgfonlayer}
\end{tikzpicture}
        \subcaption{This graph is no longer maximal; one triangle has been expanded into cycle with 9 vertices. The blue vertices no longer produce a ZFS for this graph.}
    \end{minipage}
    \caption{This is a basic example showing how forcing 2-connected outerplanar graphs does not reduce to the maximal case.}
    \label{fig:max_and_not_max}
\end{figure}
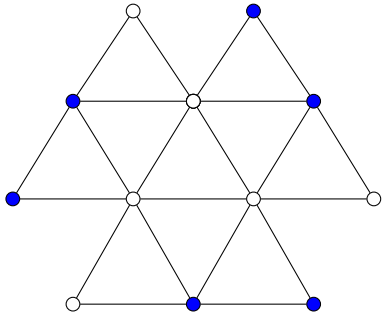
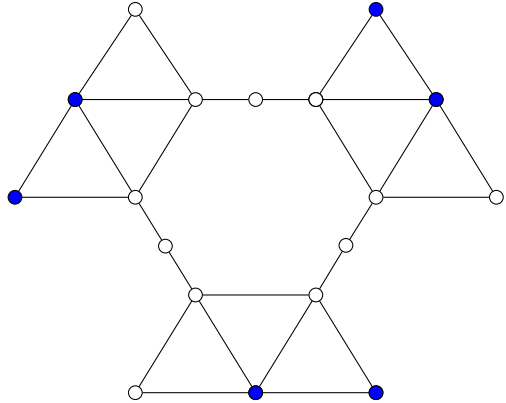

The assumptions of 2-connectedness and outerplanarity are necessary conditions. For $G$ a star (Figure \ref{fig: not_this_not_that}), which is not 2-connected, we have $Z(G) = n - 2$.

We will see that for any 2-connected outerplanar graph, the weak dual is a tree. This is an important characteristic of these types of graphs that we utilize in our zero forcing strategy. When a graph is not outerplanar, the weak dual may have cycles. This disrupts how we want to zero force and in fact may cause $Z(G)$ to be much greater than $\frac{n}{2}$. In the graph on the right in Figure \ref{fig: not_this_not_that}, we have $Z(G) = n - 2$.

\begin{figure}[H]
    \centering
    \begin{minipage}{.45\textwidth}
        \centering
        \ctikzfig{drawings/star}
        \subcaption{A graph that is outerplanar but not 2-connected. The blue vertices produce a ZFS.}
    \end{minipage}%
    \begin{minipage}{0.1\textwidth}
        \hspace{0mm}
    \end{minipage}%
    \begin{minipage}{.45\textwidth}
        \centering
        \ctikzfig{drawings/notouterplanarNEW}
        \subcaption{A graph that is 2-connected but not outerplanar. The blue vertices produce a ZFS.}
    \end{minipage}
    \caption{}
    \label{fig: not_this_not_that}
\end{figure}

\section{Preliminaries}


For a graph, the \emph{path-cover number of $G$} is the minimum size of the partition of $V(G)$ such that the induced subgraph on each part is a path (where a single vertex is considered a path).

\begin{proposition}[See \cite{work2008zero}]
For any graph $G$, $P(G) \le Z(G)$
\end{proposition}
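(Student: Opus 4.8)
The plan is to show that any zero forcing set $B$ of $G$ can be converted, via the forcing process it generates, into a partition of $V(G)$ into at most $|B|$ paths; since the path-cover number $P(G)$ is the minimum over \emph{all} such partitions, this immediately yields $P(G) \le |B|$, and taking $B$ of minimum size gives $P(G) \le Z(G)$. The key object is the record of which vertex forces which: fix a specific successful forcing sequence starting from $B$, and for each vertex $u$ that performs a force, let $f(u)$ be the (unique, at the moment of forcing) white neighbor it infects.

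First I would observe that each vertex forces at most once: once $u$ performs its force, the rule is never invoked for $u$ again in the sequence, so $f$ is a partial function with the property that every vertex has at most one $f$-image. Next, every vertex not in $B$ gets colored at some point (because $B$ is a zero forcing set), and it is colored by exactly one force, so every vertex outside $B$ has exactly one $f$-preimage; vertices in $B$ have no $f$-preimage. Therefore the functional graph of $f$ decomposes $V(G)$ into exactly $|B|$ directed paths, each starting at a distinct vertex of $B$ (a ``forcing chain''). I would then check the crucial point that each such chain $v_0 \to v_1 \to \cdots \to v_k$, where $v_0 \in B$ and $v_{i+1} = f(v_i)$, induces a path in $G$: consecutive vertices $v_i, v_{i+1}$ are adjacent by definition of the color change rule, so I only need to rule out a ``chord'' $v_i v_j$ with $j > i+1$. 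This follows from the standard argument that when $v_i$ forces $v_{i+1}$, all of $v_i$'s other neighbors — in particular $v_j$ if it were adjacent — must already be blue, which forces the chain to have reached $v_j$ before step $i$; tracing the order in which vertices along a single chain turn blue shows this is impossible, so no chord exists and the induced subgraph on each chain is exactly a path.

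Assembling these facts: the $|B|$ forcing chains partition $V(G)$, and each induces a path, so they constitute a valid path partition of size $|B|$, whence $P(G) \le |B|$. Applying this to a minimum zero forcing set gives $P(G) \le Z(G)$. The main obstacle — the only step that is not bookkeeping — is verifying the no-chord claim, i.e. that a forcing chain induces an \emph{induced} path rather than merely a walk; this is where the precise timing of the color changes must be used, and it is essentially the content of the classical ``forcing chains are induced paths'' lemma, which I would cite or reproduce in a line or two from \cite{work2008zero}.
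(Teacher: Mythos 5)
Your proposal is correct: the paper states this proposition without proof, simply citing \cite{work2008zero}, and your argument---partitioning $V(G)$ into the $|B|$ forcing chains of a fixed forcing sequence and using the timing of the forces to show each chain is an induced path, hence a valid path cover of size $|B|$---is precisely the standard argument from that reference. The one step needing care, the no-chord claim, is handled correctly: a later chain vertex adjacent to $v_i$ would have to be blue already when $v_i$ performs its force, contradicting that it only turns blue at a strictly later step.
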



A graph is $k$-connected if for any set of vertices $S$ with $|S| < k$, the graph induced on $V(G) \setminus S$ is connected.

A graph is \emph{outerplanar} if the graph has a planar embedding $\Gamma$ such that one face (``an outer face'') is incident to all vertices. Equivalently, a graph is outerplanar if and only if it has no $K_4$ nor $K_{2,3}$ minor \cite{Chartrand1967}.


The \emph{dual} of a planar graph $G$ with planar embedding $\Gamma$ is the graph $G^{'} = (V^{'},E^{'})$ where $V^{'}$ are the regions enclosed by $\Gamma$ and $E^{'}$ are pairs of regions which share a coincident edge. The \emph{weak dual} of an outerplanar graph $G$, denoted $G^{*}$ is the dual graph with the vertex corresponding to the outer face removed. We emphasize that  the same planar or outerplanar graph may have different nonisomorphic duals or weak duals. However, for our purposes we implicitly choose an embedding, and hence, refer to ``the'' dual or weak dual where appropriate.


\begin{proposition}[See \cite{proskurowski1981minimum}]
    For any 2-connected outerplanar graph $G$, its weak dual $G^{*}$, is a tree.
\end{proposition}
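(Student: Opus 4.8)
The plan is to prove the (formally slightly stronger) statement that for \emph{every} $2$-connected outerplanar graph $G$ and \emph{every} outerplanar embedding $\Gamma$, the weak dual $G^{*}$ with respect to $\Gamma$ is a tree; the uniqueness-of-embedding issue then affects only the phrase ``the'' weak dual, not the proof. First I would record the structural input: since $G$ is $2$-connected, in $\Gamma$ every face is bounded by a cycle and two distinct faces share at most one edge; combined with outerplanarity, the outer face of $\Gamma$ is bounded by a spanning cycle $C$, and every other edge of $G$ is a chord of $C$ drawn inside the disk bounded by $C$. The proof is then by induction on the number of such chords.

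For the base case, if $G$ has no chords then $G=C$ is a cycle, $\Gamma$ has a single bounded face, and $G^{*}$ is a single vertex, which is a tree. For the inductive step, pick a chord $uv$ and let $\Gamma'$ be $\Gamma$ with the arc $uv$ deleted. Then $\Gamma'$ is an outerplanar embedding of $G-uv$; moreover $G-uv$ is $2$-connected because it still contains the spanning cycle $C$; and it has one fewer chord, so the induction hypothesis gives that $(G-uv)^{*}$ is a tree. Now I would compare the two weak duals. Because $uv$ lies on the cycle consisting of $uv$ together with a path of $C$, it is not a bridge, so in $\Gamma$ it borders exactly two distinct faces $f_1$ and $f_2$; deleting $uv$ merges precisely $f_1,f_2$ into one face $f$ of $\Gamma'$ and leaves every other face unchanged. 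Reading this backwards, $G^{*}$ is obtained from $(G-uv)^{*}$ by splitting the vertex $f$ into two vertices $f_1,f_2$ joined by the dual edge of $uv$, and redistributing the remaining edges at $f$: such an edge is the dual of some edge $e\neq uv$ on the common boundary of $f$ and another face $g$, and since $e$ is a single edge meeting the arc $uv$ (if at all) only at $u$ or $v$, it lies on the boundary of exactly one of $f_1,f_2$, so the corresponding dual edge is reattached to exactly one of them. Using that two faces of the $2$-connected plane graph $G$ (respectively $G-uv$) share at most one edge, one checks that no face $g$ is reattached to both $f_1$ and $f_2$ and that no dual edge other than $f_1f_2$ is created. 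Hence $G^{*}$ is obtained from the tree $(G-uv)^{*}$ by a single ``expand a vertex into an edge'' move, which adds exactly one vertex and one edge while preserving connectedness, so $G^{*}$ is again a tree; this completes the induction.

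The step I expect to be the main obstacle is exactly this last comparison of weak duals: being fully precise that re-inserting the chord $uv$ splits one face into exactly two, and that the incident dual edges partition cleanly between the two pieces with no multiplicities. This is where both hypotheses genuinely enter --- outerplanarity forces every non-boundary edge to be a chord of the outer cycle, so that its re-insertion is a simple face split rather than a more complicated change, and $2$-connectedness supplies the facts that face boundaries are cycles and that two faces meet in at most one edge. The remaining bookkeeping (the vertex and edge counts, and connectedness) is routine. One could alternatively derive the correct edge count for a tree directly from Euler's formula, but that still leaves connectedness of $G^{*}$ to be argued, so the inductive approach above seems cleaner.
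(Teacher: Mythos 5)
The paper itself does not prove this proposition --- it is imported from \cite{proskurowski1981minimum} --- so there is no internal argument to compare against; judged on its own terms, your induction on the number of chords of the outer Hamiltonian cycle is correct and self-contained. The base case is fine, $G-uv$ is indeed still $2$-connected (it contains the spanning cycle $C$) and still outerplanarly embedded, and the face-splitting analysis works: the endpoints $u,v$ lie on the boundary cycle of the merged face $f$, the chord splits that cycle into two arcs, so every boundary edge of $f$ other than $uv$ lies on exactly one of $f_1,f_2$; hence each dual edge at $f$ is reattached to exactly one new vertex, the only new dual edge is $f_1f_2$, and such a vertex split preserves connectedness and the relation $|E|=|V|-1$, so tree-ness is inherited. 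One correction you should make: the assertion that two distinct faces share at most one edge is \emph{not} a consequence of $2$-connectedness alone --- a plane theta graph whose three $u$--$v$ paths each have length $2$ is $2$-connected and simple, yet its two bounded faces share two edges. The property does hold in your setting, but the reason is outerplanarity: an edge shared by two interior faces must be a chord of $C$, and a second shared edge would have to lie in the intersection of the two closed regions into which the first chord cuts the disk, which is just that chord itself. Since you invoke exactly this property to rule out a face $g$ being reattached to both $f_1$ and $f_2$ (which would create a dual cycle $f_1 g f_2$ and spoil the edge count), that justification should be rerouted through outerplanarity rather than $2$-connectedness; with that repair the argument is complete.
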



Since for 2-connected outerplanar graphs, $G^*$ is a tree, we can consider the number of leaves of $G^*$ which we will denote $n_\ell$. Note that when the outerplanar graph is maximal as in the result from (\ref{eq:maximal}), the $n_2$ in that equation corresponds to $n_\ell$.  The faces in $G$ corresponding to leaves in $G^*$ we will refer to as \emph{leaf faces} or \emph{leaf cycles}. 


A \emph{branch vertex} in $G^*$ is a degree two vertex such that it is adjacent to a leaf vertex or another branch vertex. A branch in $G^*$ is a component of the subgraph of $G^*$ induced by all branch vertices. We define a \emph{branch} of $G$ as the subgraph of $G$ given by the vertices and edges of the faces in $G$ that correspond to branch vertices in $G^*$.


A \emph{limb vertex} in $G^*$ is a vertex of degree three or more such that it is adjacent to at least one leaf or branch vertex. A \emph{limb face} or \emph{limb cycle} is a face in $G$ corresponding to a limb vertex in $G^*$. A \emph{limb} is the subgraph of $G$ corresponding to a component of the subgraph of limb vertices in $G^*$.


The \emph{foliage} of a 2-connected outerplanar graph is the subgraph of $G$ induced by the faces corresponding to all leaf, branch, and limb vertices in $G^*$.


A \emph{trunk vertex} in $G^*$ is a vertex of degree two or more that is not adjacent to a leaf nor a branch. The \emph{trunk} of $G$ is the subgraph induced by the faces in $G$ corresponding to trunk vertices in $G^*$. A \emph{trunk cycle} is the subgraph of $G$ corresponding to a component of the trunk of $G^*$.

The set of vertices in the intersection of the foliage and the trunk we will call the \emph{boundary vertices}. Every vertex in a tree is exactly one of the following:
a leaf, a branch, a limb, or a trunk vertex. Since For any 2-connected outerplanar graph $G$, its weak dual $G^{*}$, is a tree \cite{proskurowski1981minimum}, every vertex in $G$ is on either a leaf cycle, a branch cycle, a limb cycle or a trunk cycle. Hence, the union of the foliage with the trunk gives us all of $G$.

\begin{proposition}
    Let $n$ be the number of vertices of a 2-connected outerplanar graph $G$. Let $n_F$ be the number of vertices in the foliage, $n_T$ be the number of vertices in the trunk, and $n_B$ be the number of boundary vertices. Then, 
    \[ n = n_F + n_T - n_B.\]
\end{proposition}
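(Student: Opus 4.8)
The plan is to recognize this identity as an instance of the inclusion--exclusion principle applied to two subsets of $V(G)$: the vertex set of the foliage and the vertex set of the trunk. Write $F$ for the foliage and $T$ for the trunk, so that $V(F)$ is the set of vertices of $G$ lying on some leaf, branch, or limb cycle and $V(T)$ is the set of vertices lying on some trunk cycle; thus $n_F = |V(F)|$ and $n_T = |V(T)|$. By the definition of boundary vertices (the intersection of the foliage and the trunk), the set of boundary vertices is exactly $V(F) \cap V(T)$, so $n_B = |V(F) \cap V(T)|$. The claimed identity $n = n_F + n_T - n_B$ then follows at once from $|V(F) \cup V(T)| = |V(F)| + |V(T)| - |V(F) \cap V(T)|$, \emph{provided} one knows that $V(F) \cup V(T) = V(G)$.

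Hence the only point that genuinely requires an argument is that the foliage and the trunk together contain every vertex of $G$. To establish this I would first invoke the earlier fact that the weak dual $G^{*}$ is a tree \cite{proskurowski1981minimum}. Next I would check the claim made just before the statement that in any tree each vertex is exactly one of a leaf, a branch, a limb, or a trunk vertex; this is a short case analysis on the degree of the vertex together with the types of its neighbors, using the four definitions given. Finally, since under the chosen outerplanar embedding every bounded face of $G$ corresponds to a vertex of $G^{*}$, and since $G$ is $2$-connected so that every vertex of $G$ lies on at least one bounded face, each vertex of $G$ lies on a leaf, branch, limb, or trunk cycle, i.e. in $V(F) \cup V(T)$. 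Combining these observations yields $V(F) \cup V(T) = V(G)$, and the inclusion--exclusion computation above completes the proof.

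The main obstacle is precisely this coverage step, and within it the delicate point is bookkeeping rather than topology: a single vertex of $G$ may lie on several faces at once (for example on a leaf cycle and a trunk cycle simultaneously), so one must keep in mind that the foliage and trunk are defined as the subgraphs induced by \emph{sets} of faces, and that a vertex shared between the two is counted once in $n_F$, once in $n_T$, and removed once in $n_B$ --- which is exactly the effect of the inclusion--exclusion identity. One should also confirm that the small degenerate configurations (for instance $G$ a single cycle, where $G^{*}$ is a single vertex) are consistent with the definitions of leaf, branch, limb, and trunk, but such cases do not affect the counting identity.
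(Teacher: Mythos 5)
Your proposal is correct and matches the paper's (implicit) reasoning: the paper offers no separate proof, relying on the preceding paragraph's observation that every vertex of the tree $G^{*}$ is a leaf, branch, limb, or trunk vertex, so the foliage and trunk together cover $V(G)$, with the count then following by inclusion--exclusion since the boundary vertices are exactly the intersection. Your write-up simply makes that inclusion--exclusion step and the coverage claim explicit, so it is essentially the same argument.
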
 

An \emph{outer limb} is a component of the foliage of $G$ such that it is connected to exactly one trunk cycle in $G$. Each outer limb will have exactly two boundary vertices. An \emph{inner limb} is a component of the foliage that is adjacent to at least two trunk cycles in $G$.  Each inner limb will have two boundary vertices on each trunk it is connected to. On an inner limb, the group of vertices and edges moving along the outer face of the graph from one boundary vertex to the next boundary vertex (not on the same trunk cycle) is called an \emph{inner limb segment}.

\begin{figure}
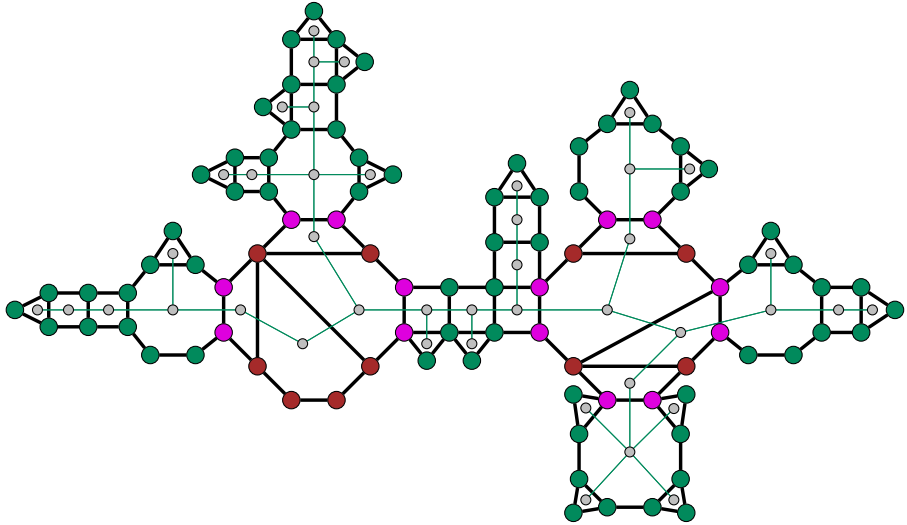

    \centering
    \ctikzfig{drawings/Foliage_Trunk_Boundary}
    \caption{A 2-connected outerplanar graph with its weak dual (in gray). The green vertices are in the foliage of $G$. The brown vertices are in the trunk of $G$. The pink vertices are the boundary vertices; they are in both the foliage and the trunk. This graph has two trunk cycles, five outer limbs, and one inner limb.}
    \label{fig:Foliage_Trunk_Boundary}
\end{figure}

\section{Bounds on $Z(G)$}

\begin{theorem}\label{thm:lower_bound}
If $G$ is a 2-connected outerplanar graph, then $Z(G)\geq n_\ell$ where $n_\ell$ denotes the number of leaves of the weak dual of $G$.
\end{theorem}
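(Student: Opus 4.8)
The plan is to exhibit, for each leaf face of the weak dual $G^*$, a distinct degree-$2$ vertex of $G$ that any zero forcing set must ``pay for'' separately, thereby forcing $|Z(G)| \ge n_\ell$. First I would recall the standard fact (used implicitly throughout the zero forcing literature) that if $v$ is a vertex of degree $2$ whose two neighbors are adjacent to each other — i.e.\ $v$ lies in a triangle — or more generally if $v$ has two neighbors $x,y$ such that every chronological forcing of $v$ must be performed \emph{by} $x$ or $y$, then one gets local constraints on zero forcing sets. The cleanest route, though, is the \emph{path cover} bound $P(G) \le Z(G)$ from Proposition 1: it suffices to show that the path cover number of $G$ is at least $n_\ell$.

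To bound $P(G)$ from below, I would argue that in any partition of $V(G)$ into induced paths, each leaf face must ``absorb'' the endpoint of a path, and no path can serve two leaf faces at once. Concretely, let $C$ be a leaf cycle; it meets the rest of $G$ in exactly one shared edge $e = uv$ (this is exactly what it means to be a leaf of the tree $G^*$, since neighboring cycles share exactly one edge and a leaf vertex of $G^*$ has only one neighbor). The vertices of $C$ other than $u,v$ form a path $Q_C$ attached to the rest of the graph only through $u$ and $v$. I claim any induced-path partition must use at least one path endpoint among the interior vertices of $C$: if $|C| \ge 4$, the cycle $C$ itself contains an induced $P_3$ among its interior-or-boundary vertices that cannot be extended around, and more carefully, a single induced path cannot contain three consecutive vertices of a cycle and leave the cycle, so some path must terminate inside $C$. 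One then checks these ``terminating'' paths are distinct for distinct leaf faces because distinct leaf cycles of a $2$-connected outerplanar graph share no interior vertices (they can share at most the boundary edge with their unique neighbor, never with each other, since $G^*$ is a tree and leaves are pairwise non-adjacent). Summing over the $n_\ell$ leaf faces yields $P(G) \ge n_\ell$, hence $Z(G) \ge n_\ell$.

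Alternatively — and this may be the more robust argument if the path-cover counting gets delicate for triangular leaf faces ($|C| = 3$, where $Q_C$ is a single vertex) — I would argue directly with the forcing process. Fix a zero forcing set $B$ and a valid chronological list of forces. For a leaf cycle $C$ with shared edge $uv$, consider the last vertex of $C \setminus \{u,v\}$ to turn blue; if no vertex of $C \setminus\{u,v\}$ is ever forced, then $C\setminus\{u,v\} \subseteq B$ and in particular $B$ meets $C$'s interior. If some interior vertex is forced, trace back to the first interior vertex $w$ of $C$ that is forced: its forcer must be a vertex of $C$ (an interior vertex of a leaf cycle has all neighbors inside $C$), and at the time of the force that forcer had $w$ as its unique white neighbor, which — because the interior of $C$ induces a path whose internal vertices have degree $2$ in $G$ — pins down that $B$ itself must contain a vertex of $C$ that is not shared with any other cycle. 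Since the leaf cycles are interior-disjoint, these witnesses are distinct, giving $|B| \ge n_\ell$.

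The main obstacle I anticipate is the edge case of triangular leaf faces, where the ``path of interior vertices'' degenerates to one vertex of degree $2$; there the path-cover lower bound of $1$ per leaf face still holds but the double-counting argument (that no path/witness is shared between two leaf faces) needs the structural input that two distinct leaves of the tree $G^*$ correspond to cycles sharing at most nothing in common — which follows from $G^*$ being a tree together with the ``neighboring cycles share exactly one edge'' description of $2$-connected outerplanar graphs, but should be stated carefully. A secondary subtlety is making the ``a path cannot pass straight through a leaf cycle'' claim precise when $|C|=4$; handling $|C|=3,4$ separately and then $|C|\ge 5$ generically is the cleanest organization.
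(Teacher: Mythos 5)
Both of your routes break down at their key step, so as written the proposal does not establish $Z(G)\ge n_\ell$. In the path-cover route, the part you argue carefully is fine: every leaf cycle $C$ attaches to the rest of $G$ along a single edge $uv$, its interior vertices have degree two, and any induced-path partition must place at least one path endpoint in that interior (a path traversing the whole interior would contain both $u$ and $v$ non-consecutively, violating inducedness). The gap is the claim that ``no path can serve two leaf faces at once.'' Disjointness of the interiors makes the endpoint \emph{vertices} distinct, not the \emph{paths}: a single induced path has two endpoints and can end in the interiors of two different leaf cycles. For instance, in the hexagon $x_1x_2\cdots x_6$ with chord $x_1x_4$ (two leaf faces, interiors $\{x_2,x_3\}$ and $\{x_5,x_6\}$), the induced path $x_2x_1x_6x_5$ ends in both interiors, and $\{x_2x_1x_6x_5,\; x_3x_4\}$ is a legitimate induced-path partition. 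So your count only yields $P(G)\ge n_\ell/2$. The paper's proof gets the full bound by charging \emph{two} path endpoints to every leaf cycle (using that no induced path can contain all vertices of a cycle), so the two endpoints of each path cancel in the count; your proposal never produces this second endpoint per leaf, and that factor of two is exactly what is missing.

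The direct-forcing route is not repairable as stated, because its conclusion is false: a zero forcing set need not contain a vertex in (or even on) every leaf cycle. In your traceback, the first interior vertex $w$ of $C$ to turn blue may be forced by one of the attachment vertices $u,v$ — these are precisely the vertices of $C$ with outside neighbors — and in that case nothing pins a vertex of $B$ inside $C$. Concretely, in the same graph $C_6$ with chord $x_1x_4$ we have $n_\ell=2$ and $Z=2$ with zero forcing set $\{x_2,x_3\}$: the other leaf face $x_1x_4x_5x_6$ contains no vertex of $B$ at all, its interior being forced entirely through $x_1$ and $x_4$. More generally, by Corollary~\ref{Branch} every $2$-connected outerplanar graph with $n_\ell=2$ has a minimum zero forcing set consisting of two adjacent vertices on a single leaf cycle, so no assignment of ``one witness of $B$ per leaf cycle'' can exist. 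If you want to salvage the argument, you should return to the path-cover bound and prove the stronger per-leaf statement (two path endpoints associated to each leaf cycle, with care about leaf cycles that share attachment vertices, and about triangular leaves), which is the route the paper takes.
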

\begin{proof}
    By Proposition 1, $Z(G)\geq P(G)$.  Observe that in any path cover, to be an induced path, no path in the cover can include all the vertices of any cycle.  Thus, any leaf cycle must participate in at least 2 endpoints of paths, and each path has 2 endpoints, yielding that $P(G)\geq n_\ell$, giving us the result.
\end{proof}

The remainder of this section will be dedicated to proving an upper bound on the zero forcing number of 2-connected outerplanar graphs, which we state in the following theorem.

\begin{theorem}\label{thm:ubound}
    If $G$ is a 2-connected outerplanar graph on $n$ vertices, then \[Z(G)\leq \frac{n}{2}.\]
\end{theorem}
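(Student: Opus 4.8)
The plan is to construct an explicit zero forcing set of size at most $n/2$ by a leaves--to--root sweep of the weak dual tree $G^{*}$. The structural fact that makes such a sweep work is that a face $C$ of $G$ with two adjacent blue vertices forces itself completely and turns all of its edges blue; more generally, if some edges of $C$ are already blue, each arc of private (degree-$2$) vertices of $C$ between two consecutive blue edges is bounded by blue and fills in for free. Rooting $G^{*}$ and colouring from the leaves inward, each non-leaf face meets its children along blue edges by the time we reach it, so in the generic situation it costs nothing and simply relays blue on to its parent edge. The cost of the set is therefore concentrated at the leaf faces, which cannot be seeded from below.

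For a leaf face that is a cycle $C_k$ with $k\ge 4$, colouring two adjacent degree-$2$ vertices both forces $C_k$ and makes its unique shared edge blue, at a cost of $2\le k/2$, i.e.\ with surplus $k/2-2\ge 0$ against the target. Leaf triangles are the delicate case: colouring the apex costs only $1$, but the apex cannot fire until one of its two shared vertices is blue, so a cluster of leaf triangles around a common face must be \emph{bootstrapped} by colouring one extra vertex, after which the apexes fire in turn around the host face and the rest of that face comes for free. Carrying this out component by component, and using the foliage, outer/inner limb, and trunk vocabulary precisely to keep track of which faces host which triangle clusters, the goal is to force the entire foliage---including every boundary vertex---using at most $\tfrac12 n_F$ blue vertices.

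The trunk is then handled with no additional seeds. A trunk face has no leaf face below it, but once the foliage is blue its boundary vertices give each trunk face that meets the foliage a pair of adjacent blue vertices, and (after checking the relaying does not stall at a shared edge) the trunk fills in; since $G^{*}$ is connected, every trunk component meets the foliage, so a set that forces the foliage forces all of $G$. Combined with the identity $n=n_F+n_T-n_B$ and the fact that every boundary vertex lies on a trunk cycle (so $n_B\le n_T$ and hence $n_F=n-n_T+n_B\le n$), this reduces the theorem to the single estimate ``the foliage can be forced with at most $\tfrac12 n_F$ blue vertices.''

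That estimate is the hard part, because the naive ``half of every cycle'' bookkeeping leaks: each leaf triangle loses $\tfrac12$ to rounding, each triangle cluster needs a $+1$ bootstrap, shared vertices get double-counted, and---most seriously---the ``relays for free'' step can \emph{stall}, leaving one endpoint of a parent edge white when it is blocked by still-white neighbours in the unprocessed parent face, which may force a few extra seeds. Exactly these effects are what produce the ceiling and the $+c-1$ term in the maximal-outerplanar bound \eqref{eq:maximal}. The real work is to choose the root, the processing order, and the seed vertices so that every such loss is paid for locally---out of the surplus $k/2-2$ of a neighbouring larger leaf cycle, out of slack inside the hosting limb or branch, or out of the $-n_B$ slack from the boundary identity---rather than accumulating a term proportional to the number of components. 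Proving that the local accounting never runs a deficit, while simultaneously verifying that the colour change rule never stalls, is, I expect, the bulk of the argument and the reason it is considerably more involved than the triangulation case.
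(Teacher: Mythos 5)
There is a genuine gap: what you have written is a plan that defers exactly the step the theorem lives in. Your reduction ``force the foliage with at most $\tfrac12 n_F$ seeds'' is announced as the hard part and then left unproven, and the mechanism you rely on --- a face with two adjacent blue vertices ``forces itself completely and turns all of its edges blue'' --- is false whenever a vertex of the face has white neighbours outside it, which is the generic situation at shared edges, at chord ends, and wherever a child face is still unprocessed. You acknowledge this stalling, but resolving it is not local bookkeeping to be ``paid for out of surplus''; it is the entire content of the paper's argument, which classifies outer limbs and inner limb segments as $E$, $O^{+}$, $O^{-}$ according to the parity of their leaf count and which leaves are seeded (Lemma \ref{lem: OuterLimbs}, Lemmas \ref{lem: Odd inner}--\ref{lem: diagonal E bridge}), and then inducts on the number of trunk cycles (Claims \ref{NoTrunk}, \ref{BaseCase1}, \ref{TwoTrunks}, \ref{Inductive step}) to show that the strategy ``two adjacent vertices on every other leaf cycle'' propagates without stalling, provided extra vertices are spent on the trunk.

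Your treatment of the trunk is also incorrect as stated. Even with the entire foliage blue, forcing around a trunk cycle stalls at every chord end: such a vertex has at least two white neighbours (the next vertex along the cycle and the far end of its chord), so the trunk does \emph{not} fill in with ``no additional seeds.'' The paper's strategy must colour up to half of the not-initially-coloured trunk vertices to plug chords, and when the chords form a fan it even needs a direction-reversal argument (switching which leaves are seeded, turning $O^{+}$ limbs into $O^{-}$ limbs) to guarantee the cheaper of the two directions is available. Consequently your budget is allocated backwards: you allow up to $\tfrac12 n_F$ in the foliage and $0$ in the trunk, whereas the proof needs the foliage cost to be only about $n_\ell \le \frac{n_F - n_B/2}{2}$ (Claim \ref{InequalityForFoliage}) precisely so that the trunk term $\frac{n_T - n_B/2}{2}$ fits under $\frac{n}{2}$ (Proposition \ref{prop:upper_bound} and Claim \ref{TightBoundsToHalf}); a final separate analysis of near-sun graphs is still required to remove the $+1$ when $n_\ell$ is odd. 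Until you (i) prove a non-stalling propagation lemma in place of the ``two adjacent blue vertices force the face'' claim, (ii) account for the seeds the trunk genuinely requires, and (iii) handle the odd-$n_\ell$ rounding without exceeding $\frac{n}{2}$, the proposal does not establish the bound.
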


To prove this, we will prove the following proposition.

\begin{proposition} \label{prop:upper_bound}
 If $G$ is a 2-connected outerplanar graph, then 
 \[Z(G)\leq 2\ceil{\frac{n_\ell}{2}}+\frac{n_T-\frac{n_B}{2}}{2}\] where $n_\ell$ is the number of leaf cycles, $n_T$ is the number of vertices in the trunk, and $n_B$ is the number of boundary vertices in $G$.
\end{proposition}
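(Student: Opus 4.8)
The plan is to construct a zero forcing set $B = B_F \cup B_T$, where $B_F$ lies in the foliage of $G$ and $B_T$ lies in the trunk, with $|B_F| \le 2\ceil{n_\ell/2}$ and $|B_T| \le \frac{1}{2}\bigl(n_T - \frac{n_B}{2}\bigr)$, and then to run the color-change rule and verify that all of $G$ turns blue. The whole argument is organized around the tree structure of $G^*$ (Proposition~2): rooting $G^*$ at a trunk vertex (or, if there is no trunk, at any vertex) gives every non-root face a unique \emph{parent} face with which it shares an edge, and the central elementary fact we will use repeatedly is a \emph{sweeping lemma}: if a face $F$ has become entirely blue and $F'$ is a face sharing the edge $\{x,y\}$ with $F$, then, going around the cycle $F'$ from the blue adjacent pair $x,y$ in both directions, $F'$ becomes entirely blue \emph{provided} that along the way no traversed vertex of $F'$ still has a white neighbor lying in some third face. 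Thus a ``chain of cycles'' with two adjacent blue vertices placed on an end cycle (off the shared edge) is forced completely, while a \emph{branching} face (one with several children in $G^*$) can stall the sweep, and this stalling is exactly what the two seed terms must pay to avoid.

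For the foliage: the foliage is the disjoint union of the (outer and inner) limbs, each a tree of cycles whose leaf faces are leaf cycles of $G$. Pair the $n_\ell$ leaf cycles; for each pair $(C,C')$ choose a path in $G^*$ from $C$ to $C'$ (staying inside the foliage when both lie in a common limb, and otherwise passing out of one limb, through the relevant trunk cycles, and into the other) and place one seed pair of two adjacent vertices on $C$, positioned so the sweep first clears $C$ and then marches face-by-face along the chosen path, clearing each face and finishing with $C'$. If $n_\ell$ is odd, the leftover leaf cycle is given its own seed pair, which is where the ceiling comes from; the total is $2\ceil{n_\ell/2}$. The content is to choose the routing so that \emph{no} branching face in any limb stalls: at a branching face $F$ with child subtrees $D_1,\dots,D_c$, each $D_e$ contains at least one leaf cycle, so it suffices to route the chains so that each $D_e$ is entered by some chain before $F$ is swept (a chain running between two leaf cycles in distinct child subtrees of $F$ serves both of them, one running from a child subtree up past $F$ serves one). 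A counting/exchange argument over the subtrees below each branching face shows $\ceil{n_\ell/2}$ chains can always be routed this way; one then checks that once all these chains are placed and the rule is run to a fixed point, every face of the foliage is blue — in particular every boundary vertex, since the boundary vertices of a limb lie on its root face, which gets swept.

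For the trunk: after the foliage step every boundary vertex is blue, and because the single $G^*$-edge joining a limb to a trunk cycle corresponds to a single shared edge of $G$, the two boundary vertices of each attachment are \emph{adjacent} and lie on a common trunk face; hence each trunk cycle $i$ already carries $\tfrac12 n_B^{(i)}$ adjacent blue pairs, where $\sum_i n_B^{(i)} = n_B$. Using one such pair we sweep the first trunk face and propagate through the trunk cycle as before, adding a seed to $B_T$ only when a branching trunk face would otherwise stall; a face-by-face accounting on trunk cycle $i$ (charging at most half of its non-boundary vertices, and using the pre-blue adjacent pairs to absorb the rest) bounds the seeds used on it by $\tfrac12\bigl(v_i - \tfrac12 n_B^{(i)}\bigr)$, and summing over trunk cycles gives $|B_T| \le \tfrac12\bigl(n_T - \tfrac{n_B}{2}\bigr)$. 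Adding the two contributions yields the claimed bound. The main obstacles, and the places the write-up must be careful, are: (i) proving the routing of the $\ceil{n_\ell/2}$ forcing-chains can always be arranged so that every branching face — in both the limbs and the trunk — is nonstalled, which is the combinatorial heart of the argument; (ii) handling inner limbs, whose two (or more) attachments and whose inner-limb segments must all be swept and which therefore interact with several trunk cycles; and (iii) the parity bookkeeping for odd $n_\ell$ and the verification that the seed counts come out to \emph{exactly} $2\ceil{n_\ell/2}$ and $\tfrac12\bigl(n_T-\tfrac{n_B}{2}\bigr)$ rather than something larger. I expect (i) to be the genuine difficulty; (ii) and (iii) to be fiddly but routine given (i).
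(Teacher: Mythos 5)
Your plan has the right overall shape (seed pairs on about half the leaf cycles, plus extra seeds in the trunk, plus a counting argument), but two essential mechanisms are missing, and one of your intermediate claims is false as stated. First, the two-phase structure ``foliage fully blue, then trunk'' does not hold. Under any scheme that seeds only $\ceil{\frac{n_\ell}{2}}$ leaf cycles, an unseeded leaf cycle attached directly to a trunk cycle (an $O^{-}$ limb in the paper's terminology) forces neither of its boundary vertices, and a limb with an even number of leaves, half of them seeded (an $E$ limb), forces exactly one of its two boundary vertices; these limbs can only be finished after the trunk has forced their missing boundary vertices. So your assertion that ``once all these chains are placed and the rule is run to a fixed point, every face of the foliage is blue --- in particular every boundary vertex'' fails, unless your chains are routed through trunk cycles --- but then sweeping a trunk face already requires the trunk seeds (to get past chord ends), and your separate, sequential accounting of $B_F$ and $B_T$ becomes circular. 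The forcing genuinely has to interleave: limbs seeded with a majority of their leaves ($O^{+}$) push onto the trunk, the trunk propagates around (with extra seeds at chords) until it feeds the starved $E$ and $O^{-}$ limbs, which then complete and push further. Organizing this interleaving (the paper does it via the $E$/$O^{+}$/$O^{-}$ classification of limbs and inner limb segments, and an induction on the number of trunk cycles with a case analysis) is the real content, not a routing of leaf-to-leaf chains.

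Second, the quantitative trunk bound is not established by your ``face-by-face accounting.'' The obstructions in the trunk are the chords of the trunk cycles, not branching faces per se, and a single stalled vertex can be the common end of a fan of chords, so that continuing the sweep in your chosen direction requires coloring all the far ends of the fan --- which can exceed half of the not-initially-colored trunk vertices. No per-face charging argument gives $|B_T|\le\frac{1}{2}\left(n_T-\frac{n_B}{2}\right)$ for an arbitrary fixed choice of seeded leaves; one needs the freedom to exchange which leaves are seeded (turning every $O^{+}$ limb into an $O^{-}$ and vice versa), which reverses the direction of the sweep around the trunk cycle and exactly complements the set of chord ends that must be colored, so that at least one of the two directions stays within half. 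Your proposal never mentions chords, and the ``counting/exchange argument'' you invoke for getting chains past branching faces is asserted rather than proved --- that assertion is precisely the substance of the claims the paper proves. As it stands, then, the proposal identifies the correct quantities but leaves the two steps that carry the proof (the foliage--trunk interleaving and the direction-reversal bound on trunk seeds) unproved.
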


Once this is proven, we will demonstrate how Theorem \ref{thm:ubound} follows from Proposition \ref{prop:upper_bound}. We prove the bound by presenting a strategy for zero forcing any 2-connected outerplanar graph.

A graph $G$ is a \textit{graph of two parallel paths} if there exist two independent induced paths of $G$ that cover all the vertices of $G$ and such that the graph can be drawn in the plane in such a way that the paths are parallel and edges (drawn as segments, not curves) between the two paths do not cross (see \cite{row2012technique}).

\begin{proposition}[See \cite{row2012technique}]
 Let $G = (V_G, E_G)$ be a graph. Then $Z(G) = 2$ if and only if $G$ is a graph of two parallel paths.
\end{proposition}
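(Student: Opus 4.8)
This statement is quoted from \cite{row2012technique}, but here is how I would reconstruct a proof; I would establish the two implications separately.

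For the direction ``$G$ is a graph of two parallel paths $\Rightarrow Z(G)\le 2$'', I would exhibit an explicit zero forcing set of size two. Write the two independent induced paths as $P_1=p_1p_2\cdots p_s$ and $P_2=q_1q_2\cdots q_t$, listed in the left-to-right order of a planar drawing in which they are parallel and the edges joining them (the ``rungs'') do not cross, and take $B=\{p_1,q_1\}$. The plan is to maintain the invariant that the blue set always has the form $\{p_1,\dots,p_a\}\cup\{q_1,\dots,q_b\}$ and to check that, as long as this set is not all of $V(G)$, some blue vertex can force while preserving the invariant. The only thing that can go wrong is getting stuck, and this is exactly what the non-crossing hypothesis rules out: if $p_a$ had a white rung-neighbour $q_k$ with $k>b$ while $q_b$ also had a white rung-neighbour $p_m$ with $m>a$, then the rungs $p_aq_k$ and $p_mq_b$ would cross. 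Hence at least one of $p_a,q_b$ has all of its rungs already blue, so has its successor along its own path as its unique white neighbour, which it forces. Since each force enlarges the blue set, the process terminates with $V(G)$ blue, giving $Z(G)\le 2$; combined with the fact that $Z(G)=1$ precisely for paths, this pins down the biconditional up to the degenerate reading in which one of $P_1,P_2$ is empty.

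For the direction ``$Z(G)=2\Rightarrow G$ is a graph of two parallel paths'', fix a zero forcing set $B=\{u_1,v_1\}$, run the colour-change rule one force at a time, and let $\tau(w)$ be the time at which $w$ turns blue. This yields exactly two forcing chains $P_1=u_1\cdots u_s$ and $P_2=v_1\cdots v_t$ partitioning $V(G)$, with $u_i$ forcing $u_{i+1}$ and $v_j$ forcing $v_{j+1}$. I would invoke two standard facts: (i) each chain is an \emph{induced} path, since if $u_i$ were adjacent to a later $u_j$ it would have had two white neighbours when it forced $u_{i+1}$; and (ii) if $x$ forces $y$, then every neighbour of $x$ other than $y$ is blue strictly before time $\tau(y)$. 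It then suffices to show the rungs are ``monotone'', i.e., there is no pair of edges $u_iv_j$, $u_{i'}v_{j'}$ with $i<i'$ and $j>j'$, since then $P_1$ and $P_2$ can be drawn as parallel paths with non-crossing rungs. If such a crossing pair existed: applying (ii) to $v_{j'}$ forcing $v_{j'+1}$ gives $\tau(u_{i'})<\tau(v_{j'+1})$; since $j'+1\le j$, $\tau(v_{j'+1})\le\tau(v_j)$; applying (ii) to $u_i$ forcing $u_{i+1}$ gives $\tau(v_j)<\tau(u_{i+1})$; and since $i+1\le i'$, $\tau(u_{i+1})\le\tau(u_{i'})$. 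Concatenating yields $\tau(u_{i'})<\tau(u_{i'})$, a contradiction. The indices cause no trouble: $i<i'\le s$ forces $i<s$ and $j>j'\ge 1$ forces $j'<t$, so the two forces invoked genuinely occur.

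The substantive half is the second implication, and its crux is observation (ii) together with the choice of the cyclic chain of timing inequalities; the structural facts about forcing chains and the sweep argument in the first implication are routine by comparison. The only genuine delicacy is the convention handling the degenerate ``two parallel paths'' in which a path has no vertices (so $G$ is itself a path with $Z(G)=1$), which is why the clean biconditional should be read with the conventions of \cite{row2012technique}.
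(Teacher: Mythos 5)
The paper does not prove this proposition at all---it is stated with the citation to \cite{row2012technique} and used as a black box---so there is no internal proof to compare against; what you have written is essentially a reconstruction of Row's original argument, and it is sound. The necessity direction (forcing chains are induced paths, plus the timing fact that when $x$ forces $y$ all other neighbours of $x$ are blue strictly earlier, closed into a cyclic chain of inequalities to forbid crossing rungs) is exactly the standard proof, and your index check that both invoked forces actually occur is the right point to verify. The sufficiency sweep is also correct, with one small corner you should spell out: the crossing argument guarantees that at least one of $p_a,q_b$ has all rungs blue, but that vertex might be the \emph{last} vertex of its path and so have nothing left to force; in that case its entire path is blue, hence every rung-neighbour of the other frontier vertex is blue and the other path advances, so the process never stalls. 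Finally, you are right that the clean biconditional needs the convention of \cite{row2012technique} that a path is \emph{not} a graph of two parallel paths (otherwise a path admits such a decomposition yet has $Z=1$); the paper's informal definition omits this, so flagging it is appropriate rather than a defect in your argument.
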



\begin{corollary} \label{Branch}
For any 2-connected outerplanar graph with $n_\ell = 2$, $Z(G) = 2$.  
\end{corollary}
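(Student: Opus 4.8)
The plan is to prove $Z(G)=2$ by showing that $G$ is a graph of two parallel paths and then invoking the characterization of \cite{row2012technique}, which states that $Z(G)=2$ if and only if $G$ is a graph of two parallel paths. (The inequality $Z(G)\ge 2$ also follows directly from Theorem~\ref{thm:lower_bound} since $n_\ell=2$, but it is subsumed once $G$ is recognized as a graph of two parallel paths, so the whole statement reduces to producing the two-parallel-paths structure.)

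First I would unpack the hypothesis. By \cite{proskurowski1981minimum} the weak dual $G^*$ is a tree, and a tree with exactly two leaves is a path, so $G^*$ is a path $C_1 - C_2 - \cdots - C_k$. Translating this back to $G$, it says that $G$ is a chain of cycles: the interior faces are cycles $C_1,\dots,C_k$, consecutive cycles $C_i$ and $C_{i+1}$ share exactly one edge $f_i$, no other pair of faces shares an edge, and hence the only edges of $G$ not lying on the outer (Hamiltonian) cycle $H$ are the chords $f_1,\dots,f_{k-1}$ (with no chords at all when $k=1$).

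Next I would exhibit the two parallel paths. Fix the outerplanar embedding and draw a simple curve $D$ that enters the disk bounded by $H$ through one edge of the boundary arc of the leaf cell $C_1$, passes in succession through the cells $C_1,C_2,\dots,C_k$ crossing each shared edge $f_i$ exactly once and in order, and exits through one edge of the boundary arc of the leaf cell $C_k$; such a $D$ exists because each $f_i$ is incident to the consecutive cells $C_i,C_{i+1}$, each cell is a topological disk, and $C_1,C_k$ being leaves each have a nonempty boundary arc on $H$. Deleting from $H$ the two edges crossed by $D$ leaves two arcs $P_1$ and $P_2$ that partition $V(G)=V(H)$. Since $D$ meets each chord $f_i$ exactly once, the two endpoints of $f_i$ lie on opposite sides of $D$, hence in different arcs, so no chord has both endpoints in a single $P_j$; as the only edges of $G$ within $P_j$ are therefore arc edges of $H$, each $P_j$ is an \emph{induced} path. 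Straightening $D$ to a horizontal line turns the embedding into a drawing with $P_1$ above the line, $P_2$ below it, and all remaining edges (the two deleted $H$-edges together with the chords) running from top to bottom and pairwise non-crossing by planarity of the embedding; this is exactly a certificate that $G$ is a graph of two parallel paths, and so $Z(G)=2$ by \cite{row2012technique}.

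The step I expect to be the main obstacle is making this curve-and-straightening argument fully rigorous: one must check that $D$ can be taken simple and transverse to the chord set with exactly one crossing per chord, and that after deforming $D$ to a line the arcs $P_1,P_2$ can genuinely be realized as parallel segments with the cross-edges drawn straight and without crossings. A purely combinatorial alternative is to choose two ``cut'' edges of $H$ directly and verify the parallel layout by hand, but this invites a somewhat delicate case analysis of how the non-crossing chords sit around $H$ — they need not be nested in the naive cyclic labeling (for instance, two far-apart triangular ears glued onto one large face), so one first has to relabel cleverly — which is precisely the bookkeeping the curve picture avoids. One could alternatively induct on $k$, peeling off the leaf cell $C_k$ and extending a two-parallel-paths representation of the chain $C_1,\dots,C_{k-1}$, but then some care is needed to guarantee that the shared edge $f_{k-1}$ sits in a usable position in the inductive representation (as a ``cross'' edge rather than interior to one of the two paths).
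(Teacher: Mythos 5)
Your proposal is correct and follows the same route as the paper: the paper's proof simply asserts that a 2-connected outerplanar graph with $n_\ell = 2$ is a graph of two parallel paths and then invokes the characterization $Z(G)=2$ if and only if $G$ is a graph of two parallel paths. Your write-up merely supplies the details (weak dual is a path, so $G$ is a chain of cycles, and the Hamiltonian outer cycle can be split into two induced arcs with non-crossing cross edges) that the paper leaves implicit.
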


\begin{proof}
    Any 2-connected outerplanar graph with $n_\ell = 2$ will be a graph of two parallel paths. Hence, $Z(G) = 2$.
\end{proof}

\begin{figure}[!h]
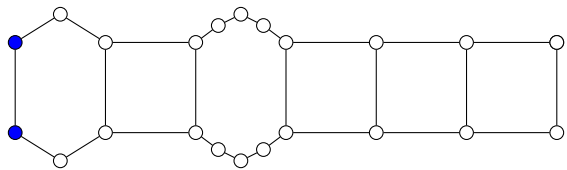

    \centering
    \ctikzfig{drawings/dualispath}
    \caption{A 2-connected outerplanar graph with $n_\ell = 2$. A ZFS is obtained by coloring two adjacent vertices on either of the two leaf cycles.}
    \label{fig: Branch}
\end{figure}

\pagebreak

\begin{claim} \label{NoTrunk}
    Every 2-connected outerplanar graph with no trunk cycles has \[ Z(G) \leq 2\left\lceil \frac{n_\ell}{2}\right\rceil.\]
\end{claim}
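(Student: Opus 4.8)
The plan is to build an explicit zero forcing set whose size is at most $n_\ell$, which is in turn at most $2\lceil n_\ell/2\rceil$. First I would dispose of the degenerate cases: if $G^*$ is a single vertex then $G$ is one cycle and $Z(G)=2$, and if $n_\ell=2$ then $G$ is a graph of two parallel paths and $Z(G)=2$ by Corollary~\ref{Branch}; so assume $n_\ell\ge 3$, which forces $G^*$ to contain a limb vertex. Since $G$ has no trunk cycles, $G$ equals its foliage and every vertex of the tree $G^*$ is a leaf, branch, or limb vertex.

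For the construction, fix a leaf cycle $L_0$ and root $G^*$ at $L_0$; then every cycle $C\neq L_0$ has a well-defined \emph{parent edge} $e_0(C)$ (the edge it shares with its parent) and a list of \emph{child edges}. I place blue vertices of two kinds. The \emph{kickstart}: color a degree-$2$ vertex of $L_0$ together with one of its neighbors; since every vertex of $L_0$ other than the two ends of the edge $L_0$ shares with its child has degree $2$, these two vertices force all of $L_0$, in particular both ends of that shared edge. The \emph{pre-colors}: at each limb cycle $C$ of degree $d=\deg_{G^*}(C)$, list the child edges $f_1,\dots,f_{d-1}$ of $C$ in clockwise order starting just after $e_0(C)$, and for $i=1,\dots,d-2$ color the endpoint of $f_i$ that is clockwise-farther from $e_0(C)$. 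The total number of blue vertices is $2+\sum_{C\text{ limb}}(\deg_{G^*}(C)-2)$, and since $\sum_{v}(\deg_{G^*}(v)-2)=-2$ for a tree, while leaves contribute $-n_\ell$ and (degree-$2$) branch vertices contribute $0$, this equals $2+(n_\ell-2)=n_\ell\le 2\lceil n_\ell/2\rceil$.

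It then remains to show this set is a zero forcing set. The heart is an induction, on the size of the subtree of $G^*$ rooted at a cycle $C$, of the statement: if the pre-colors of every limb cycle in that subtree are blue, then as soon as both ends of $e_0(C)$ are blue, forcing eventually colors all of $C$ together with its whole subtree. For a leaf cycle it is immediate, since every vertex other than the two ends of $e_0(C)$ has degree $2$. For a branch cycle ($d=2$) the two blue ends of $e_0(C)$ sweep around $C$ (which has no other high-degree vertices) until the single child edge has both ends blue, and the inductive hypothesis finishes. For a limb cycle the two blue ends of $e_0(C)$ launch two forcing sweeps around $C$; the pre-colored far ends of $f_1,\dots,f_{d-2}$ are precisely what lets the clockwise sweep get past those child edges — at $f_i$ it first forces into the $i$-th child subtree, which by induction becomes entirely blue, which then unblocks the sweep — while the counter-clockwise sweep reaches the far end of $f_{d-1}$; when the sweeps meet, both ends of $f_{d-1}$ are blue and induction handles the last child too. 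Applying this with $C$ equal to the child of $L_0$ (whose parent edge is blued by the kickstart) shows that all of $G$ becomes blue.

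The step I expect to be the main obstacle is the limb-cycle case of the induction: verifying that the two sweeps together cover all of $C$, that the pre-colored vertices land on the endpoints dictated by the sweep direction (hence by the rooting at $L_0$), and that the degenerate limb cycles with no ``free'' edge at all — for instance a triangle all three of whose edges are shared — still get colored, where the sweep continues only because forcing a child subtree cascades back around the cycle. One also needs to observe that coincidences among the pre-colored vertices of adjacent limb cycles only reduce the count, so the bound $n_\ell$ is never exceeded.
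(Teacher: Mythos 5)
Your construction has a genuine gap, and it sits exactly where you flagged it. The inductive statement ``once both ends of $e_0(C)$ are blue, forcing colors all of $C$ and its subtree'' can only be run if the end of $e_0(C)$ that is supposed to launch the counter-clockwise sweep has no white neighbors outside the subtree of $C$. In the recursive application this fails: when the clockwise sweep on a limb cycle $C$ reaches a pre-colored child edge $f_i$ with $i\le d-2$, the far end $b_i$ is blue but its next neighbor on $C$ (clockwise past $f_i$) is still white, because the sweep on $C$ cannot pass $f_i$ until the child subtree is finished. So inside the child cycle $C_i$ only the one-sided sweep from the near end $a_i$ is available, while your limb-cycle argument needs both sweeps (the far end of the last child edge of $C_i$ is not pre-colored). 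This circular dependency kills the construction on concrete graphs. Take $G^*$ to be a leaf $L_0$ (the root) attached to a limb cycle $A$ of dual degree $3$, whose first clockwise child edge is shared with another limb cycle $B$ of dual degree $3$ having two leaf children $L_2,L_3$, and whose second child edge is shared with a leaf $L_1$; so $n_\ell=4$. Your set (two kickstart vertices on $L_0$, the far end of $A$'s first child edge, the far end of $B$'s first child edge) forces $L_0$, the clockwise arc of $A$ up to and into $B$, all of $L_2$, and the counter-clockwise arc of $A$ up to the far end of the $L_1$-edge, and then stalls: the near end of $B$'s last child edge has two white neighbors ($L_3$ and that edge's far end), the far end of $A$'s first child edge has two white neighbors (one on $A$, one on $B$), and the vertex reached by the counter-clockwise sweep has two white neighbors (one on $A$, one on $L_1$). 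No further forces are possible, so the proposed set of size $n_\ell$ is not a zero forcing set; and making both children of $A$ limb cycles shows the problem is not avoided by re-choosing the clockwise order.

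By contrast, the paper proves the claim by induction on $n_\ell$: it pulls a branch off a limb cycle that is a leaf of the subtree of limb vertices, and its coloring consists of two adjacent vertices on every other leaf cycle ($2\lceil n_\ell/2\rceil$ vertices, all placed on leaf cycles). Because the colored leaves alternate around the outer face, forcing arrives at each limb cycle from leaves on both sides, which is exactly the two-sided progress your nested one-sided sweep lacks. Note also that your scheme, if it worked, would prove the stronger bound $Z(G)\le n_\ell$; repairing the deadlock (for instance, adding pre-colors whenever a non-last child of a limb cycle is itself a limb cycle) costs extra vertices, and it is not clear the count stays within $2\lceil n_\ell/2\rceil$ without essentially a different argument.
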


\begin{proof}
    We prove this claim by induction on $n_\ell$. The case when $n_\ell = 1$ is clear. The case when $n_\ell =2$ is handled by Corollary \ref{Branch}. Suppose $n_\ell$ is at least 3. 
    
    We know the dual graph is a tree. Furthermore, since there are no trunk cycles, every vertex of $G^*$ is either a leaf, a branch, or a limb vertex. The subgraph in $G^*$ formed by the limb vertices must also be a tree. If this was not the case, then there would be a trunk cycle. At least two of the limb vertices must have degree 1 in this subgraph. Since limb vertices have at least degree 3 in $G^*$, in $G^*$ these two limb vertices must be adjacent to at least two branches. 

    Pull off one of the branches in $G$ attached to one of the limb cycles described above. By ``pull off'' we mean to disconnect the branch from the remainder of the graph at the 2-separation between the branch and the limb cycle it is connected to. Include the two vertices at the 2-separation in both graphs. Notice that that the branch pulled off is now a partial 2-path and there are still no trunk vertices in the remainder of the graph. Let $G'$ be the graph with the branch removed and $n_\ell'$ be the number of leaf cycles of $G'$. Then $n_\ell' = n_\ell - 1$.

    Our forcing strategy is to color two adjacent vertices on every other leaf cycle. By every other, we mean $\left\lceil \frac{n_\ell}{2}\right\rceil$ leaf cycles.
    
    Case 1: $n_\ell$ is even. Thus $G'$ has an odd number of leaves. By the induction hypothesis, a zero forcing set for $G'$ is accomplished by coloring 2 vertices of the leaves of every other branch, including two branches that are right next to each other. Without loss of generality, suppose that the removed branch is between these two branches. We claim that the zero forcing set for $G'$ also is a zero forcing set for $G$. This set can force down the branches that were colored to the limb cycles. Then, since both branches that were closest to the removed branch were colored, this can force along the limb cycles up to the 2 vertices where the branch was pulled off. Then the remainder of the forces in $G'$ can proceed unobstructed. What remains in $G$ is a single branch with two of its adjacent endpoints colored. These can then force along the branch since the branch is a partial 2-path.
    
    Case 2: $n_\ell$ is odd. Thus $G'$ has an even number of leaves. By the induction hypothesis, $G'$ can be forced by coloring every other branch. Now in $G$, color these vertices of $G'$ as well as two adjacent nodes of the leaf of the removed branch. These can force down its branch to the 2 vertices that it was pulled off of. Then the remainder of the colored nodes can force the rest of $G'$ without obstruction.
\end{proof}

\begin{figure}[!h]
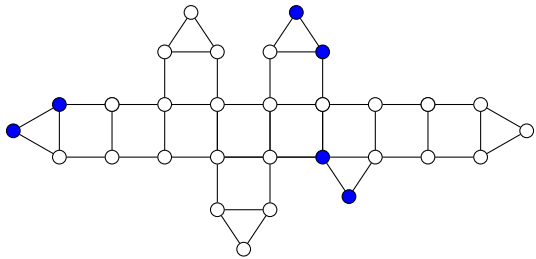

    \centering
    \ctikzfig{drawings/NoTrunk_ZFS}
    \caption{A 2-connected outerplanar graph with no trunk. Coloring two adjacent vertices on every other leaf produces a Zero Forcing Set.}
    \label{fig: No_Trunk}
\end{figure}

\begin{claim} \label{BaseCase1}
    For a 2-connected outerplanar graph with one trunk cycle a ZFS is produced by coloring two adjacent vertices on every other leaf cycle and coloring less than or equal to half the number of not-initially-colored vertices of the trunk. 
\end{claim}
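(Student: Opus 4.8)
The plan is to reduce the one-trunk-cycle case to the no-trunk case (Claim \ref{NoTrunk}) by treating each outer limb and the (at most one) inner limb separately and then dealing with the single trunk cycle on its own. Fix the unique trunk cycle $T$ of $G$. By definition every component of the foliage is attached to $T$: the outer limbs meet $T$ in a single pair of boundary vertices each, and if there is an inner limb it meets $T$ in two such pairs, partitioning $T$ into two arcs. First I would handle each outer limb as a self-contained piece. An outer limb, together with the two boundary vertices where it meets $T$, is a 2-connected outerplanar graph with no trunk; by the argument of Claim \ref{NoTrunk} we can zero force it by coloring two adjacent vertices on every other leaf cycle of that limb, using $2\lceil n_\ell^{(i)}/2\rceil$ colored vertices for the $i$th limb, and — crucially — this forcing terminates by pushing blue all the way to the two boundary vertices of that limb, exactly as in the even/odd case analysis of Claim \ref{NoTrunk} (where forcing proceeds "up to the two vertices where the branch was pulled off"). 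Summing over all outer limbs and the inner limb gives the "$2\lceil n_\ell/2\rceil$ on every other leaf cycle" part of the count.

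Next I would deal with the trunk cycle itself. Once all limbs have been forced blue out to their boundary vertices, the trunk cycle $T$ has some of its vertices already blue — namely all its boundary vertices — and what remains white are the arcs of $T$ strictly between consecutive boundary vertices. A single cycle in which a set $S$ of vertices is already blue and the white vertices form arcs between them is forced to completion once we color, in each white arc of length $\ell$, roughly $\ell/2$ additional well-chosen vertices so that each maximal white run has length at most $1$ after coloring; more carefully, within one arc, coloring every other internal vertex leaves isolated white vertices each of which has two blue neighbors and hence gets forced. Since the white vertices of $T$ are precisely the not-initially-colored trunk vertices, and in each arc we color at most half of them, the total number of trunk vertices we color is at most half of the non-initially-colored trunk vertices, which is the second part of the claim. (If there is an inner limb, its two segments have already been forced by the limb step, and the two arcs of $T$ it creates are handled identically; the inner limb contributes no new subtlety here beyond splitting $T$ into two arcs instead of leaving it as one closed cycle with chords removed.)

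Finally I would verify that the forces in the two phases do not interfere: the limb phase only ever forces within a limb and onto boundary vertices, never onto interior trunk vertices (a boundary vertex on $T$ has at most one white trunk-neighbor only after its limb is blue, but we instead let the trunk phase's colored vertices do the trunk forcing), and the trunk phase, once $T$ is entirely blue, can if necessary re-push into any limb — though by construction that is unnecessary. One technical point to nail down is the degenerate situations: an outer limb that is a single leaf cycle (so $n_\ell^{(i)}=1$ and we color two adjacent vertices of it, which then force around to both boundary vertices since a cycle with two adjacent blue vertices forces completely), and the case where $T$ shares an edge — not just a vertex pair — with a limb cycle, so that some trunk vertices coincide with limb vertices; here the boundary-vertex bookkeeping of Proposition 3 keeps the count honest.

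The main obstacle I expect is the bookkeeping of the trunk count, namely showing the colored trunk vertices number at most $(n_T - \tfrac{n_B}{2})/2$ with the right $n_B$: one must check that the boundary vertices already forced blue by the limb phase are exactly the $n_B$ vertices subtracted off, that "every other vertex in each white arc" never exceeds half of that arc's length, and that summing over all arcs of $T$ (and, in the inner-limb case, over both sub-arcs) the savings from the blue boundary vertices are correctly distributed — this is where Claim \ref{NoTrunk}'s clean "every other leaf cycle" picture has to be glued to an honest parity count on the cycle $T$ rather than just asserted.
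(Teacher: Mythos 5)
There are genuine gaps, and they sit at exactly the points where the paper's proof has to work hardest. First, your leaf-coloring budget is wrong for the claim as stated: you color every other leaf \emph{within each limb separately}, spending $2\lceil n_\ell^{(i)}/2\rceil$ vertices per limb, and $\sum_i 2\lceil n_\ell^{(i)}/2\rceil$ can be nearly twice $2\lceil n_\ell/2\rceil$ (e.g.\ many outer limbs each consisting of a single leaf cycle forces you to color every leaf, while the claim colors only every other one globally). So even if your strategy forced the graph, it would not establish the claim, nor the bound in Proposition \ref{prop:upper_bound}. Second, the key assertion of your limb phase --- that a limb colored this way pushes blue ``all the way to the two boundary vertices'' --- is false once the limb is attached to the trunk: a boundary vertex has white trunk neighbors, so it can never perform a force, and the forcing order that works in the detached limb (where forcing passes through a boundary vertex) does not transfer. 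This is precisely the content of Lemma \ref{lem: OuterLimbs}: an even-leafed limb with exactly half its leaves colored (which is what your rounding gives) forces only \emph{one} boundary vertex, and under the global alternating coloring an $O^{-}$ limb forces neither; the paper's proof works because the globally alternating choice makes $O^{+}$ and $O^{-}$ limbs alternate around the trunk, so completed $O^{+}$ limbs seed trunk forcing that delivers the missing boundary vertices to the $E$ and $O^{-}$ limbs (with an extra seed vertex needed in the all-$E$ case). Incidentally, with a single trunk cycle there are no inner limbs at all, by definition.

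Third, your trunk phase assumes the white trunk vertices lie on chordless arcs between already-blue boundary vertices, and that coloring every other internal vertex finishes the job. But the trunk cycle generally has chords (it is the union of all trunk faces), so a blue trunk vertex can have several white neighbors through chord ends and your every-other coloring does not force; moreover your premise that all boundary vertices are blue already fails by the previous point. Handling the chords is the heart of the paper's argument: forcing propagates around the trunk from $O^{+}$ limbs, stalls at chord ends, and one must color the opposite chord ends, pairing them against uncolored boundary vertices in the multi-chord case and, in the fan configuration, possibly \emph{reversing} the global choice of which leaves are colored so that the direction of trunk forcing flips --- this reversal is what guarantees the ``at most half of the not-initially-colored trunk vertices'' count. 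Your proposal correctly flags the trunk bookkeeping as the main obstacle, but the plan offered for it does not engage with chords at all, which is where the claim actually lives.
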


In order to prove this fact, we will need to understand how forcing works both in the foliage and in the trunk. Lemma \ref{lem: OuterLimbs} shows how forcing operates in outer limbs.

Outer limbs are classified in three ways. They are classified based whether there are an even or odd number of leaf cycles on the outer limb and how many of these limbs have vertices in our proposed ZFS. Outer limbs that have an even number of leaf cycles with exactly half of those leaves having vertices in our proposed ZFS we call \emph{$E$ limbs}. Outer limbs with an odd number of leaves with one more than half of the leaves having vertices in our proposed ZFS we call \emph{$O^{+}$ limbs}. Finally, Outer limbs with an odd number of leaves with one fewer than half of the leaves having vertices in our proposed ZFS we call \emph{$O^{-}$ limbs}.

\begin{lemma} \label{lem: OuterLimbs}
    Color two adjacent vertices on every other leaf cycle in $G$. The following will occur:
    \begin{enumerate}
        \item Outer $E$ limbs will force one boundary vertex. If the other boundary vertex gets forced from the trunk cycle, then the entire limb will get forced.
        \item Outer $O^{+}$ limbs will force the entire limb, through the boundary vertices, and onto the trunk.
        \item Outer $O^{-}$ limbs will force neither boundary vertex. If both boundary vertices get forced from the trunk cycle, then the entire inner limb will get forced.
    \end{enumerate}
\end{lemma}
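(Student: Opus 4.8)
The plan is to prove all three statements together by induction on the number of faces of the outer limb (equivalently, on the size of the corresponding subtree of the weak dual $G^{*}$), and in fact to prove a recursive strengthening. Call $H\subseteq G$ a \emph{rooted limb} with \emph{root edge} $e=\{u,v\}$ if $H$ is the union of the faces of a subtree of $G^{*}$ that attaches to the rest of $G$ along the single edge $e$, so that $u,v$ play the role of the two boundary vertices; classify $H$ as $O^{+}$, $E$, or $O^{-}$ by whether $\lceil n_\ell(H)/2\rceil$, $n_\ell(H)/2$, or $\lfloor n_\ell(H)/2\rfloor$ of its leaf cycles carry colored vertices, just as for outer limbs. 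The inductive claim is that, after the initial coloring (two adjacent vertices on every other leaf cycle of $G$, chosen, as we may, to avoid the shared edge of each leaf cycle), the forces available inside $H$: make all of $H$ blue, and in particular make $u$ and $v$ blue, when $H$ is $O^{+}$; make exactly one of $u,v$ blue, and make all of $H$ blue once the other is blue, when $H$ is $E$; make neither $u$ nor $v$ blue, and make all of $H$ blue once both are blue, when $H$ is $O^{-}$. Taking $e$ to be the edge the outer limb shares with its trunk cycle (whose endpoints are its two boundary vertices), and reading ``inner limb'' in item~3 as ``outer limb,'' this recovers items~1--3.

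The base cases are the one-face rooted limbs. Here I use that two adjacent blue vertices on a leaf cycle force the whole leaf cycle --- the blue arc spreads around, and since a leaf cycle meets its one neighbor in a single edge with adjacent endpoints (and the colored pair is not that edge), the arc closes up --- after which both endpoints of that shared edge can force into the neighboring face. Thus a colored leaf cycle is the ($O^{+}$) base case, and an uncolored leaf cycle admits no force until both of $u,v$ are blue, after which the same argument finishes it: the ($O^{-}$) base case.

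For the inductive step I decompose a rooted limb at the face $F$ incident to $e$, together with the smaller rooted limbs attached along the other edges of $F$. If $F$ is a branch face, $H$ is a branch (a path of faces) capped by a smaller rooted limb or leaf; I use that a branch is a graph of two parallel paths, so by the proposition from \cite{row2012technique} the branch just transmits what its cap produces --- when the cap makes the branch's far edge fully blue the forces run down to $e$, and otherwise the branch passes along whatever boundary forces it receives --- and since a branch contributes no leaf cycles, the classification of ``branch $+$ cap'' equals that of the cap. If $F$ is a limb cycle $C$, I apply the induction hypothesis to each attached sub-limb; the sub-limbs and $e$ occur in some cyclic order around $C$, with $u,v$ consecutive. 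The definition of a limb cycle as a face adjacent to at least one leaf or branch in $G^{*}$ matters here: a short parity count shows that when $H$ is $O^{+}$ or $E$, at least one child of $C$ is ``$O^{+}$-type,'' making its attaching edge on $C$ fully blue and starting a blue arc along $C$. I then track this arc --- it crosses a child's attaching edge only once that edge is entirely blue and it stalls at an endpoint of $e$ (which keeps a white neighbor outside $H$) or at a not-yet-completed attaching edge --- to read off which of $u,v$ gets forced, and feed these forces back into the $E$ and $O^{-}$ children to complete them, which is legitimate because zero forcing is confluent. A bookkeeping step then checks that the parities and counts of colored leaves in the assembled pieces reproduce exactly the $O^{+}$/$E$/$O^{-}$ label predicted for $H$.

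I expect the main obstacle to be the case analysis at a limb cycle: proving that an $E$ outer limb forces \emph{exactly one} boundary vertex --- not zero, and crucially not both --- and that an $O^{-}$ outer limb forces \emph{neither}. Both amount to controlling where the ``$O^{+}$-type'' sub-limbs sit, relative to $u$ and $v$, in the cyclic order around the limb cycle: two $O^{+}$-type children flanking $C$, one next to each of $u,v$, would wrongly force both boundary vertices, so one must rule this out --- and I expect this is exactly where the concrete definition of ``every other leaf cycle'' (presumably a fixed traversal of the leaf cycles, colored alternately) has to be invoked, to guarantee the colored leaves land so that the induced $O^{+}$-type pieces cannot straddle a limb cycle badly. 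A secondary point is making ``if the other (resp.\ both) boundary vertex(es) get forced from the trunk cycle'' precise: it claims the limb's forces and the trunk's forces interleave into one valid forcing sequence, which follows from confluence once one checks that a boundary vertex, once its trunk-side neighbors are blue, really can push the arc back into the limb.
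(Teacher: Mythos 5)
There is a genuine gap, and it sits exactly where you flag it yourself. Your plan is a structural induction over the rooted limb, but the entire content of the lemma --- that an $E$ limb forces exactly (or even at least) one boundary vertex, that an $O^{-}$ limb forces neither but completes once both are blue, and that an $O^{+}$ limb completes on its own --- is deferred to the limb-cycle case, where you write that ``a short parity count shows'' at least one child is $O^{+}$-type and that you ``expect'' the precise meaning of ``every other leaf cycle'' rules out two $O^{+}$-type children flanking $u$ and $v$. Neither the parity count nor the placement argument is carried out, and the placement argument is not a routine check: which children of a limb cycle inherit an $O^{+}$/$E$/$O^{-}$ label is determined by how the global alternation of colored leaves (in the cyclic order of leaf cycles around the outer face) restricts to the sub-limb, and controlling that restriction is precisely what distinguishes the three behaviors in items 1--3. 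As written, the proposal is a proof plan whose central step is an acknowledged conjecture, so it does not establish the lemma. A secondary soft spot: at branch faces you invoke the characterization $Z(G)=2$ iff $G$ is a graph of two parallel paths to say the branch ``transmits'' forces; that proposition only asserts the existence of a size-two zero forcing set, whereas what you need is that forcing propagates along the branch from a fully blue shared edge, which is the direct partial-2-path argument the paper uses, not a consequence of the $Z=2$ characterization.

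For comparison, the paper avoids your limb-cycle case analysis entirely by a reduction to the already-proved no-trunk case (Claim \ref{NoTrunk}): detach the outer limb from the trunk at its two boundary vertices and cap it there with a chordless cycle, producing a trunk-free 2-connected outerplanar graph $G'$ with $n_\ell'$ leaves. If the limb is $O^{+}$, half the leaves of $G'$ are colored, so Claim \ref{NoTrunk} forces all of $G'$, in particular the cap, which translates back to the limb forcing through both boundary vertices onto the trunk. If the limb is $E$ or $O^{-}$, coloring the cap would complete $G'$ by Claim \ref{NoTrunk}, and forcing one or both boundary vertices from the trunk in $G$ plays the role of that colored cap; the ``exactly one boundary vertex'' statement for $E$ limbs comes from the alternation guaranteeing exactly one of the two leaves adjacent to the cap is colored. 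If you want to salvage your induction, you must first fix a concrete meaning of ``every other leaf cycle'' (alternation along the cyclic order of leaves) and then prove the flanking/placement claim at limb cycles; otherwise the cleaner path is to adopt the capping reduction and let Claim \ref{NoTrunk} do the work.
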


\begin{proof}
    Begin by coloring two adjacent vertices on every other leaf cycle. Each outer limb has exactly two boundary vertices. Remove the outer limb from the trunk at and including the boundary vertices. Then, attach a cycle with no chords (a leaf cycle) to the boundary vertices. The resulting graph, call it $G'$, will have no trunk. Let $n_\ell'$ be the number of leaf cycles in $G'$. Notice that in $G$ there were $n_\ell' - 1$ leaf cycles on this outer limb. There are three possibilities of what may happen:

    1. Outer $E$ limb: $n_\ell'$ is odd and $\frac{n_\ell' - 1}{2}$ of the leaves are colored. We know that since we colored the leaves before adding the leaf cycle on the boundary vertices that the added leaf will not be colored. Since we colored every other leaf, we know that exactly one of the two leaves closest to the added leaf will be colored. By the same strategy presented in Claim \ref{NoTrunk}, the colored leaves will force down their branches, along the limb cycles, and to the bases of the closest branches. Exactly one of the boundary vertices will get forced. By Claim \ref{NoTrunk}, if the added leaf were to be colored, then all of $G'$ would be forced. Consider removing the added leaf cycle and attaching the limb back on the trunk in $G$. One of the boundary vertices will be forced and if the other boundary vertex gets forced from the trunk, then it will be as if the added leaf was colored in $G'$. The entire limb would get forced.

    2. Outer $O^{+}$ limb: $n_\ell'$ is even and $\frac{n_\ell'}{2}$ of the leaves are colored. By Claim \ref{NoTrunk}, all of $G'$ will be forced. We know the added leaf cycle is not originally colored. Thus, it gets forced from the ZFS. Therefore, in $G$, the limb will be entirely forced and will force through both boundary vertices onto the trunk. 

    3. Outer $O^{-}$ limb: $n_\ell'$ is even and $\frac{n_\ell'}{2} - 1$ of the leaves are colored. Color two adjacent vertices on the added leaf cycle. By Claim \ref{NoTrunk}, all of $G'$ will get forced. Thus, when both boundary vertices of an outer $O^{-}$ limb are forced from along the trunk cycle, the entire limb will get forced.
\end{proof}

Now, we will use this lemma to help us prove how to force a 2-connected outerplanar graph with one trunk cycle.

\noindent \textit{Proof of Claim \ref{BaseCase1}.}
    There are two cases that need to be considered.

    Case 1: There is at least one odd-leafed limb. Begin by coloring two adjacent vertices on every other leaf cycle, starting with an odd-leafed outer limb and coloring it so it is an $O^{+}$ limb. If $n_\ell$ is odd, then there will be one more $O^{+}$ limb than $O^{-}$ limb. Otherwise, there will be an equal number of $O^{+}$ and $O^{-}$ outer limbs. Since $O^{+}$ limbs force completely and begin forcing along the trunk cycle, it is easier to force a graph when there are more $O^{+}$ outer limbs. We will achieve our desired result by showing our strategy must hold for graphs with an equal number of $O^{+}$ and $O^{-}$ outer limbs. In other words, when $n_\ell$ is even. 
    
    Suppose $n_\ell$ is even. From lemma \ref{lem: OuterLimbs} we understand that $O^{+}$ force completely and begin forcing along the trunk while $O^{-}$ need to be forced from the trunk (see Figure \ref{fig:one_trunk}). By virtue of every other leaf being colored, $O^{+}$ and $O^{-}$ limbs will oscillate around the outerface of the graph. The goal is to ensure that each $O^{+}$ limb can force along both directions around the outerface of the trunk cycle until it reaches a boundary vertex of its neighboring $O^{-}$ limbs. Since forcing passes through $E$ limbs, the only potential hindrance to forcing around the trunk cycle are the trunk cycle's chords.
    
    Choose any $O^{+}$ limb. Then, select a direction---clockwise or counter clockwise. If there are no chord ends between the $O^{+}$ limb and the next $O^{-}$ limb then all the vertices of the trunk between these two limbs will get forced. If there is a chord end between these two limbs, then forcing will stop at this chord end. This vertex may be the chord end to multiple chords. Each of the other chord ends must be colored in order for forcing to continue along the trunk cycle. There are three types of chord configurations that must be addressed (see Figure \ref{fig:chord_cases}).

    The first case occurs when both chords ends are only the ends to a single chord. In this case, color the other chord end and forcing will continue. Since we only have to color one of the two chord ends, these types of chords will never make us color more than half of the not-initially-colored vertices of the trunk cycle.

    The second case occurs when both chord ends are the ends to multiple chords. The existence of chords like this requires there to be some outer limb in between these chord ends. If this were not the case, then some face of the trunk would have degree one which by definition cannot occur. We do not need to color the not-initially-colored boundary vertices (unless perhaps they are also chord ends). For each chord end we need to color we can pair it with one of these not-initially-colored boundary vertices. This ensures that we color no more than half of the not-initially-colored vertices of the trunk cycle.

    The third case occurs when the chords are arranged in a fan-like structure (see Figures \ref{fig:chord_cases}(c), \ref{fig:chord_cases}(f), and \ref{fig:reverse_chords}). Each of the tips of the fan (the other chord ends) must be colored in order for forcing to continue. It is possible that forcing through a fan requires that we color more than half of the not-initially-colored vertices of the trunk. If this is the case, we must reverse the direction forcing occurs around the trunk cycle by altering the leaf cycles that we colored. Switching the leaf cycles that are colored changes all $O^{+}$ limbs to $O^{-}$ limbs and vice versa. This reverses the direction of forcing along the trunk cycle.. 

    When the direction of forcing around the trunk cycle is reversed, the chord ends of a fan that need to be colored are exactly reversed as well. This ensures that in one of the two directions, we are guaranteed to color no more than half of the not-initially-colored vertices of the trunk cycle.

    Now that we are able to force through all the chord ends, each $O^{+}$ limb will be able to force around the trunk cycle to its neighboring $O^{-}$ limbs. Each $O^{-}$ limb will have both of its boundary vertices forced. Thus, by Lemma \ref{lem: OuterLimbs}, each $O^{-}$ limb will get forced completely and the entire graph will be forced. 

    Case 2: There are no odd-leafed limbs. Color two adjacent vertices on every other leaf. There are only $E$ limbs and $n_\ell$ must be even. Since $E$ limbs do not force along the trunk cycle unless one boundary vertex is forced from the trunk cycle, we must color one boundary vertex of an $E$ limb to begin forcing along the trunk. Choose any limb and add the boundary vertex not forced from the leaves to the ZFS. Then forcing will begin around one direction of the trunk. Plug each chord as needed (in the same manner as in Case 1) such that we can force to the next $E$ limb. One of the next $E$ limb's boundary vertices is forced by its leaves. The other will be forced from the trunk cycle. That entire limb will get forced and then we continue to plug each chord as needed until we reach the next $E$ limb. Proceed by plugging each chord end until we get back to the first $E$ limb. Finally, the boundary vertex we added to the ZFS will be able to force up into the outer $E$ limb and the entire graph will be forced. \qed

\begin{figure}
    \centering
    \begin{minipage}{.45\textwidth}
       \centering
       \ctikzfig{drawings/OneTrunk_Leaves}
       \subcaption{A 2-connected outerplanar graph with one trunk cycle and three outer limbs. Two adjacent vertices are colored on every other leaf cycle.}
    \end{minipage}%
    \begin{minipage}{.1\textwidth}
        \hspace{0mm}
    \end{minipage}%
    \begin{minipage}{.45\textwidth}
       \centering
       \ctikzfig{drawings/OneTrunk_Stuck}
       \subcaption{The blue vertices are those that were originally colored and the red are the vertices that were forced by the blue vertices. The $O^{+}$ limb forced completely, the $E$ limb forced one of two boundary vertices, and the $O^{-}$ limb did not force either boundary vertex.}
    \end{minipage}%
    \vspace{2mm}
    \begin{minipage}{0.45\textwidth}
       \centering
       \ctikzfig{drawings/OneTrunk_Unstuck}
       \subcaption{Two chord ends of the trunk cycle were colored blue. Now, forcing can continue counter-clockwise around the trunk cycle, passing through the $E$ limb, and continuing to the $O^{-}$ limb. Both boundary vertices of the $O^{-}$ limb will be forced, hence it will force completely. The blue vertices form a ZFS.}
    \end{minipage}
    \caption{}
    \label{fig:one_trunk}
\end{figure}

\begin{figure}
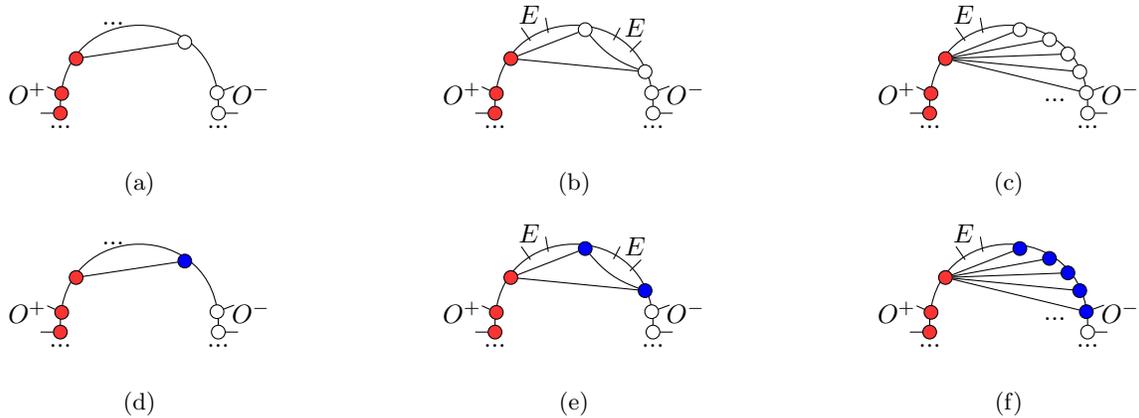

    \centering
    \begin{minipage}{0.30\textwidth}
        \centering
        \ctikzfig{drawings/ex1_reversal_chords}
        \subcaption{}
    \end{minipage}%
    \begin{minipage}{0.05\textwidth}
        \hspace{0mm}
    \end{minipage}%
    \begin{minipage}{0.30\textwidth}
        \centering
        \ctikzfig{drawings/ex2_reversal_chords}
        \subcaption{}
    \end{minipage}%
    \begin{minipage}{0.05\textwidth}
        \hspace{0mm}
    \end{minipage}%
    \begin{minipage}{0.30\textwidth}
        \centering
        \ctikzfig{drawings/ex3_reversal_chords}
        \subcaption{}
    \end{minipage}%
    
    \vspace{2mm}

    \begin{minipage}{0.30\textwidth}
        \centering
        \ctikzfig{drawings/ex1_reversal_chords_ZFS}
        \subcaption{}
    \end{minipage}%
    \begin{minipage}{0.05\textwidth}
        \hspace{0mm}
    \end{minipage}%
    \begin{minipage}{0.30\textwidth}
        \centering
        \ctikzfig{drawings/ex2_reversal_chords_ZFS}
        \subcaption{}
    \end{minipage}%
    \begin{minipage}{0.05\textwidth}
        \hspace{0mm}
    \end{minipage}%
    \begin{minipage}{0.30\textwidth}
        \centering
        \ctikzfig{drawings/ex3_reversal_chords_ZFS}
        \subcaption{}
    \end{minipage}
    
    \caption{Each figure shows a section of a trunk cycle between an $O^{+}$ limb and an $O^{-}$ limb. Forcing begins at the $O^{+}$ limb and works clockwise around the trunk cycle to the $O^{-}$ limb. There may be any number of degree two vertices on the trunk cycle. Only the boundary vertices and chord ends are shown. The specific structure of the outer limbs is omitted.}
    \label{fig:chord_cases}
\end{figure}

\begin{figure}
    \centering
    \begin{minipage}{0.30\textwidth}
       \centering
        \ctikzfig{drawings/Reverse_Chords}
        \subcaption{A 2-connected outerplanar graph with one trunk cycle. We have colored every other leaf.} 
    \end{minipage}%
    \begin{minipage}{.05\textwidth}
        \hspace{0mm}
    \end{minipage}%
    \begin{minipage}{0.30\textwidth}
        \centering
        \ctikzfig{drawings/Reverse_Chords_Stuck}
        \subcaption{The leaves force as much as they can.}
    \end{minipage}%
    \begin{minipage}{.05\textwidth}
        \hspace{0mm}
    \end{minipage}%
    \begin{minipage}{0.30\textwidth}
        \centering
        \ctikzfig{drawings/Reverse_Chords_Unstuck}
        \subcaption{We color the chord ends necessary in order to continue to force counter-clockwise from the $O^{+}$ limb to the $O^{-}$ limb.}
    \end{minipage}%

    \vspace{2mm}
    \begin{minipage}{0.30\textwidth}
        \centering
        \ctikzfig{drawings/Reverse_Chords_Fixed}
        \subcaption{By switching the leaves colored, we switch the direction forcing occurs around the trunk.}
    \end{minipage}%
    \begin{minipage}{.05\textwidth}
        \hspace{0mm}
    \end{minipage}%
    \begin{minipage}{0.30\textwidth}
        \centering
        \ctikzfig{drawings/Reverse_Chords_Fixed_Stuck}
        \subcaption{We force from the leaf cycles as much as we can.}
    \end{minipage}%
    \begin{minipage}{.05\textwidth}
        \hspace{0mm}
    \end{minipage}%
    \begin{minipage}{0.30\textwidth}
        \centering
        \ctikzfig{drawings/Reverse_Chords_Fixed_ZFS}
        \subcaption{Now, we only need to color one chord end to continuing forcing along the trunk clockwise from the $O^{+}$ limb to the $O^{-}$ limb.}
    \end{minipage}
    \caption{}
    \label{fig:reverse_chords}
\end{figure}

Just like outer limbs, inner limb segments are characterized by the number of leaves and the amount of leaves that have vertices in the proposed ZFS; we can have $E$, $O^{+}$, and $O^{-}$ inner limb segments. The next three lemmas will be useful for considering how forcing occurs in inner limbs. This will be crucial to know in order to prove our result for graphs with multiple trunk cycles.

\begin{lemma} \label{lem: Odd inner}
    Every $O^{+}$ inner limb segment will force both of its boundary vertices.
\end{lemma}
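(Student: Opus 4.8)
The plan is to run the same argument used for outer limbs in Lemma \ref{lem: OuterLimbs}(2), now applied to a single inner limb segment. Fix the standing coloring, namely two adjacent vertices on every other leaf cycle of $G$, and let $S$ be an $O^{+}$ inner limb segment with boundary vertices $a$ and $b$, where $a$ lies on one trunk cycle and $b$ on another. The first step is an elementary counting observation: since $S$ carries an odd number of leaf cycles of which one more than half are colored, and the colored leaf cycles occur in an every-other pattern, the two extreme leaf cycles of $S$ — the one closest to $a$ and the one closest to $b$ — must both be colored.

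Next I would localize. Cut $S$ away from the two trunk cycles meeting it and cap each cut site with a new chordless cycle, producing a 2-connected outerplanar graph $G'$ with no trunk cycle in which $S$ appears as an outer limb. The leaf-cycle count for that outer limb passes from odd (in $G$) to even (in $G'$, after adjoining one uncolored leaf cycle at each end), with exactly half of them colored and still in the every-other pattern; that is, it is an outer $O^{+}$ limb of $G'$ in the sense of Lemma \ref{lem: OuterLimbs}. Claim \ref{NoTrunk} (equivalently, Lemma \ref{lem: OuterLimbs}(2) applied inside $G'$) then forces all of $G'$; in particular the newly adjoined cycles, hence $a$ and $b$, get forced.

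To finish, transfer back to $G$. The forcing sequence in $G'$ that colors $a$ and $b$ involves only vertices of $S$ together with the adjoined cycles, and forcing can enter an adjoined cycle only through the boundary vertex it is attached at; therefore every force up to and including those coloring $a$ and $b$ is performed inside $S$, using neighborhoods that coincide in $G$ and $G'$. Hence the identical sequence of forces is available in $G$, and both boundary vertices of $S$ are forced, which is the claim.

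I expect the main obstacle to be the localization step. Unlike an outer limb, an inner limb segment need not be cleanly separated from the rest of $G$ by a single $2$-cut — the limb cycles at its ends can be shared with the neighboring segment of the same inner limb and with the adjacent trunk cycles — so some care is needed to say precisely which vertices and edges form $G'$, to verify that $G'$ is 2-connected, outerplanar, and trunk-free, and to confirm that the global every-other coloring of $G$ restricts to an every-other coloring of the leaf cycles of $G'$ with the half-colored count required by Claim \ref{NoTrunk}. If this clean localization proves awkward, the fallback is to reproduce the inductive forcing argument of Claim \ref{NoTrunk} directly inside $S$, using the observation of the first step that both extreme leaf cycles of $S$ are colored to push forcing outward to $a$ and to $b$.
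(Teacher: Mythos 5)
Your core observation is the right one, and it is exactly the engine of the paper's proof: in an $O^{+}$ segment the every-other pattern forces the two extreme leaf cycles (those nearest the boundary vertices) to be colored, and these force down their branches and along the outer face to the boundary vertices. However, the main step as you wrote it does not go through. First, the arithmetic in your localization is off: an $O^{+}$ segment has an odd number $2k+1$ of leaf cycles with $k+1$ colored, so after adjoining one uncolored cap cycle at each end you have $2k+3$ leaf cycles (still odd), of which only $k+1$ are colored; that is one fewer than the $\ceil{\frac{2k+3}{2}}=k+2$ colored leaves that the strategy of Claim \ref{NoTrunk} requires, so you cannot conclude that all of $G'$ is forced. (Indeed it generally is not: the two caps are uncolored and each sits next to the segment's colored extreme leaf, mimicking an $O^{-}$ configuration.) Second, as you yourself suspected, cutting away a single segment is not a clean operation: a segment is essentially an outer-face walk whose limb cycle(s) and trunk-attachment edges are shared with the neighboring segment, and a cap cycle would have to be attached at a single boundary vertex, destroying 2-connectedness.

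Both problems disappear if you aim for the weaker statement that is actually needed, which is what the paper does: remove the trunk cycles from the \emph{whole} inner limb (keeping all boundary vertices) and glue a chordless cap cycle along each former trunk-attachment edge, so the result is a genuine trunk-free 2-connected outerplanar graph. Then do not invoke Claim \ref{NoTrunk} to force everything; instead use only its forcing mechanics, namely that a colored leaf forces down its branch and along the limb to the base of the next branch or leaf. Since the two leaves of the $O^{+}$ segment closest to the boundary are colored, that base is precisely a cap cycle, i.e.\ the forcing reaches the segment's boundary vertex on each trunk. This is your stated ``fallback'' argument, and it is the correct (and the paper's) route; the stronger claim that the whole capped graph gets forced is both false in general and unnecessary.
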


\begin{proof}
Consider an inner limb that has an $O^{+}$ segment. Remove the trunks off of the inner limb (keeping the boundary vertices) and add leaf cycles where the trunks were removed. Then, we have a 2-connected outerplanar graph with no trunk. We know that colored leaves will force down their branches and then force along the outside of the graph until the base of the next branch (or leaf). In an $O^{+}$ segment, the two leaves closest to the boundary vertices will be colored. They will force down their respective branches and then to the base of the leaf cycles closest to them. These will be the leaf cycles we added on to the boundary vertices. Thus, the leaf cycles in the $O^{+}$ segment will force to the boundary vertices. 
\end{proof}

\begin{lemma} \label{lem: diagonal E bridge}
    Suppose there is an inner limb with exactly two $E$ segments. If one boundary vertex is forced on both trunk cycles such that they are diagonal to each other, then the entire inner limb will get forced.
\end{lemma}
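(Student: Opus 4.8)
Let $T_1,T_2$ be the two trunk cycles meeting the inner limb $I$, let $x_1,y_1$ be the boundary vertices of $I$ on $T_1$ (so that $x_1y_1$ is the edge shared by $I$ and $T_1$), and let $x_2,y_2$ be the boundary vertices on $T_2$. Label the two inner limb segments so that $S$ runs along the outer face from $x_1$ to $x_2$ and $S'$ runs from $y_1$ to $y_2$; by hypothesis both $S$ and $S'$ are $E$ segments. ``Diagonal'' means that, after relabelling, the forced boundary vertex on $T_1$ is $x_1$ and the forced one on $T_2$ is $y_2$ --- the ends of \emph{different} segments (the other possibility, $\{y_1,x_2\}$, being symmetric).

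The plan is to reuse the surgery from the proofs of Lemma~\ref{lem: OuterLimbs} and Lemma~\ref{lem: Odd inner}. Detach $I$ from $T_1$ and from $T_2$ (keeping the four boundary vertices) and glue a chordless cycle onto each of the edges $x_1y_1$ and $x_2y_2$; the result $H$ is a $2$-connected outerplanar graph with no trunk, and $I\subseteq H$. Keep the two adjacent vertices already coloured on every other leaf cycle of $S$ and $S'$. Since $S$ is an $E$ segment, exactly one of its two end leaf cycles is coloured, and by Lemma~\ref{lem: OuterLimbs}(1) the leaves of $S$ force along $S$ up to exactly one of $x_1,x_2$; likewise $S'$ forces up to exactly one of $y_1,y_2$. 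The intended point of the diagonal hypothesis is that the external forces it guarantees --- $x_1$ for $S$, $y_2$ for $S'$ --- are exactly the missing boundary vertices: by the ``if the other boundary vertex gets forced from the trunk cycle'' half of Lemma~\ref{lem: OuterLimbs}(1), $S$ then forces completely and $S'$ forces completely, and together with the shared edges $x_1y_1$ and $x_2y_2$ this is all of $I$. When the coloured leaf cycles are not already in the configuration that makes this alignment work, I would reverse which leaf cycles of $I$ are coloured (as in the reversal step of Claim~\ref{BaseCase1}), which exchanges the two ``stuck ends''.

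I expect the alignment to be the crux. A direct count shows that a single ``every other leaf cycle'' colouring of $\partial I$ cannot put the stuck end of $S$ at $x_2$ and the stuck end of $S'$ at $y_1$ at the same time, so the forcing cannot be analysed on the two segments independently; instead one has to follow how the forcing begun at $x_1$ and at $y_2$ travels \emph{around} $I$, using that the edges $x_1y_1$ and $x_2y_2$ bind the two segments together at both ends, and check that the parity of the leaf count on each ($E$!) segment makes the two cascades meet. This is exactly where both hypotheses are essential: for a ``parallel'' forced pair, or when one segment is an $O^{\pm}$ segment (so its end leaf count is odd and there is no free end leaf to toggle), the two pieces of external help land in the same place and $I$ need not be forced. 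Two smaller points also have to be dealt with: the degenerate case of a segment carrying no leaf cycle, where one instead checks that the shared edge together with the externally forced vertex pushes forcing straight along the chordless path of that segment; and the bookkeeping that, back in $G$, the forces the two glued cycles stand in for really do enter $I$ only through $x_1$ and $y_2$.
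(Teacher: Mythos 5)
Your surgery (detach the inner limb $I$, cap the boundary edges with chordless cycles, and reuse the no-trunk machinery) is in the same family as the paper's argument, but as written the proof does not close. You yourself isolate the crux --- that the endpoint each $E$ segment fails to force internally must be exactly the boundary vertex supplied by the hypothesised diagonal pair --- and then you leave it unproved: the decisive step appears only as ``I expect the alignment to be the crux'' and ``one has to \dots check that the parity \dots makes the two cascades meet.'' Your fallback of re-choosing which leaf cycles of $I$ are coloured is not available inside this lemma: the colouring is part of the hypothesis (the segments are $E$ segments \emph{for the proposed ZFS}), and the lemma is invoked in Claims \ref{TwoTrunks} and \ref{Inductive step} after the global ``every other leaf'' colouring has been fixed, so the conclusion must hold for that colouring; swapping $I$'s coloured leaves turns the statement into a different one whose compatibility with those proofs you would then have to re-establish. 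Your unproved ``direct count'' is also suspect: if the colouring alternates around the outer face of the detached limb, then (since each segment has an even number of leaves) exactly one end leaf of each segment is coloured and the two stuck ends automatically form a diagonal pair, which can perfectly well be $\{x_2,y_1\}$ --- the configuration you claim is impossible. The genuine question, which your proposal never settles, is whether that stuck diagonal coincides with the diagonal forced from the trunks (equivalently, whether the phase of the alternation is the right one), and the degenerate segment with no leaf cycles is likewise left as a promissory note.

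For comparison, the paper does not analyse the two segments independently at all. It removes both trunk cycles, so the entire inner limb becomes a $2$-connected outerplanar graph with no trunk in which exactly half of the leaves are coloured; by the strategy of Claim \ref{NoTrunk} (even case) this graph is forced completely, and the paper then argues that the identical sequence of forces goes through once the trunks are reattached, because the only forces that could be obstructed are those involving boundary vertices, and the two diagonally forced boundary vertices from the trunks supply exactly what is missing. If you want to keep your segment-by-segment route, you must actually prove the alignment: determine from the fixed global colouring which endpoint each $E$ segment forces, show the two internally forced endpoints form one diagonal, and show the hypothesised externally forced pair is the complementary one (or show the conclusion still holds when it is not), rather than postulating a reversal of the colouring.
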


\begin{proof}
Remove both trunk cycles that are connected to the inner limb with two even segments. Then, we are left with just the inner limb, which is a 2-connected outerplanar graph with no trunk. Exactly half of its leaves are colored, so we know it can be forced completely. The forcing works precisely the same when the trunk cycles are put back on the inner limb, assuming that two diagonal boundary vertices are forced from the trunk cycles.
\end{proof}

\begin{figure}[H]
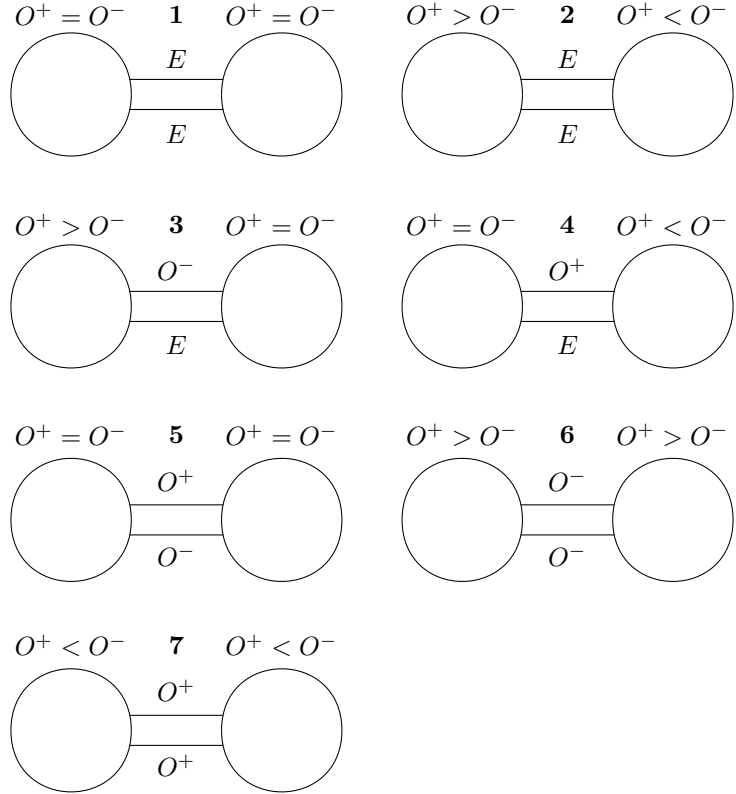

    \ctikzfig{drawings/TwoTrunks_Template}
    \caption{Each graph is a 2-connected outerplanar graph with two trunks cycles. The chords within the trunks, outer limbs, and leaves on the inner limb are omitted.}
    \label{fig: Seven Cases}
\end{figure}

\begin{claim} \label{TwoTrunks}
    Let $G$ be a 2-connected outerplanar graph with exactly two trunk cycles. Then a ZFS is produced by coloring two adjacent vertices on every other leaf cycle and coloring less than or equal to half the number of not-initially-colored vertices of the trunk.
\end{claim}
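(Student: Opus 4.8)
The plan is to reduce to the one-trunk situation already handled in Claim~\ref{BaseCase1}, using the unique inner limb that joins the two trunk cycles as the mechanism that ``ignites'' forcing on a trunk cycle and then carries it across to the other. Since $G^{*}$ is a tree with exactly two trunk components, there is a unique path between them in $G^{*}$; the non-trunk faces on that path (together with the leaf and branch faces hanging off it) form one inner limb, whose outer boundary splits into exactly two inner limb segments, one on each side, each of which is an $E$, $O^{+}$, or $O^{-}$ segment. The possible shapes of this configuration are exactly the templates drawn in Figure~\ref{fig: Seven Cases}. As a preliminary I would isolate the following consequence of the proof of Claim~\ref{BaseCase1}: if at least one boundary vertex of a trunk cycle has been forced from outside, then the rest of that trunk cycle, together with every $E$ and $O^{-}$ outer limb attached to it, can be forced by additionally coloring at most half of the not-initially-colored vertices of that trunk cycle. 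This is precisely the single-chord / multi-chord / fan bookkeeping of Claim~\ref{BaseCase1}, the fan case possibly requiring a reversal of the forcing direction around that cycle.

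First fix the leaf coloring: color two adjacent vertices on every other leaf cycle, using the freedom in the global phase (and in where the unavoidable doubled pair sits when $n_\ell$ is odd) so that at least one inner limb segment becomes an $O^{+}$ segment whenever that is possible, i.e.\ whenever at least one of the two segments contains an odd number of leaf cycles. If some inner limb segment is $O^{+}$, then by Lemma~\ref{lem: Odd inner} it forces one boundary vertex on each of the two trunk cycles, so both trunk cycles are ignited at once; by the preliminary fact each trunk cycle can then be completed inside its own budget, and once a trunk cycle has been forced entirely it forces both boundary vertices of the remaining ($E$ or $O^{-}$) inner limb segment and of every $O^{-}$ outer limb attached to it, so these are forced completely by Lemma~\ref{lem: OuterLimbs}(3) and the analogous inner-limb statements. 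Thus all of $G$ is forced, and the colored trunk vertices number at most half of the not-initially-colored trunk vertices, by summing the two per-cycle budgets.

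The remaining case is that both inner limb segments contain an even number of leaf cycles, so both are $E$ segments. Here I would ignite the first trunk cycle exactly as in Case~2 of Claim~\ref{BaseCase1}, by adding to the set one of its boundary vertices that is not forced from the leaves (a boundary vertex of the inner limb on that side, or of an $E$ outer limb there); by the preliminary fact the first trunk cycle is then forced completely within its budget, the one extra vertex being absorbed by that cycle's allowance exactly as in Claim~\ref{BaseCase1}. Now both inner limb boundary vertices on the first trunk cycle are forced, while each $E$ segment independently forces one of its own boundary vertices, so in particular some inner limb boundary vertex on the second trunk cycle is forced; since the first trunk cycle supplies a boundary vertex diagonal to it, Lemma~\ref{lem: diagonal E bridge} forces the entire inner limb, which ignites the second trunk cycle. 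The preliminary fact then finishes the second trunk cycle together with its $E$ and $O^{-}$ outer limbs within budget, and $G$ is forced.

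The step I expect to be the real obstacle is reconciling the fan-reversal device with the rest of the argument. Reversing the direction of forcing around a trunk cycle means globally switching which leaf cycles are colored, and this flips $O^{+}\leftrightarrow O^{-}$ on every outer limb and on both inner limb segments simultaneously; one must therefore exhibit a single global choice --- phase, placement of the doubled pair, and at most one reversal --- that keeps the chord bookkeeping within budget on both trunk cycles at once while still leaving the inner limb able to transmit forcing from one trunk cycle to the other. I would approach this by exploiting that the bridge is two-way: it suffices to make the good choice for whichever trunk cycle forcing reaches first, since the inner limb then delivers an already-forced boundary vertex to the second trunk cycle regardless of how its chords are arranged, and forcing proceeds around that cycle in both directions from there, so the second cycle's remaining analysis is again the one-trunk statement. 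Verifying that this genuinely avoids a budget conflict, and that an $E$ segment always forces the boundary vertex needed to make the diagonal hypothesis of Lemma~\ref{lem: diagonal E bridge} available, against each of the seven templates of Figure~\ref{fig: Seven Cases}, is where the bulk of the case analysis lies.
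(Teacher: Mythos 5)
Your proposal follows essentially the same route as the paper's proof: the same decomposition into two trunk cycles joined by an inner limb whose segments are classified as $E$, $O^{+}$, or $O^{-}$, the same key ingredients (Lemma \ref{lem: Odd inner}, Lemma \ref{lem: diagonal E bridge}, and the chord-plugging/reversal budget machinery of Claim \ref{BaseCase1}), merely reorganized into two master cases (some inner segment can be made $O^{+}$ versus both segments $E$) in place of the paper's seven templates from Figure \ref{fig: Seven Cases}. The points you flag as needing verification (the reversal/budget reconciliation and the diagonal hypothesis of Lemma \ref{lem: diagonal E bridge}) are precisely the steps the paper itself handles by its per-case reductions, so the approach matches rather than diverges.
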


\begin{proof}
    We will consider seven cases (see Figure \ref{fig: Seven Cases}).

    Case 1: Forcing will begin at the outer $O^{+}$ limb cycles. They will force around the trunk cycle until they reach the neighboring outer $O^{-}$ limbs (plugging the chords as necessary). Since both trunk cycles have an equal number of $O^{+}$ and outer $O^{-}$  limbs, we know that both of them would be able to completely force  if the inner limb did not connect them. Hence, both trunk cycles will force around the trunk until a boundary vertex of the inner limb. Coloring every other leaf in the graph ensures that the $O^{+}$ and outer $O^{-}$  limbs will alternate around the outer face of the graph. Thus, the boundary vertices that are forced from the trunk cycles will be diagonal to each other. By Lemma \ref{lem: diagonal E bridge}, the inner limb will be forced completely and forcing can continue unobstructed around the remainder of the two trunk cycles.

    Case 2: The trunk cycle with one more $O^{+}$ than $O^{-}$ will force completely. We can reduce this cycle to a leaf on the inner limb. The inner limb will then act as an outer $O^{+}$  limb on the remaining trunk cycle. This trunk cycle will now have an equal number of $O^{+}$ and outer $O^{-}$  limbs and thus will be able to force completely as described in Claim \ref{BaseCase1}.

    Case 3: The trunk cycle with one more $O^{+}$ than $O^{-}$ will force entirely. We can reduce this cycle to a leaf cycle on the inner limb. The inner limb thus becomes an $E$ limb with half of its leaves colored. So by Claim \ref{BaseCase1}, the remainder of the graph will be forced (see Figure \ref{fig:twotrunks}).
    
    Case 4: The $O^{+}$ on the inner limb will force both of its boundary vertices by Lemma \ref{lem: Odd inner}. The trunk cycle with an equal number of $O^{+}$ and $O^{-}$ will force the other boundary vertex of itself with the inner limb. The entire trunk cycle will then be forced. Forcing will continue through the $E$ segment on the inner limb. Finally, we can reduce this trunk cycle and the inner limb to an $O^{+}$ on the other trunk cycle. Thus, this trunk cycle now has an equal number of $O^{+}$ and $O^{-}$ and will force completely by Claim \ref{BaseCase1}.

    Case 5: Both trunk cycles will force until the its respective boundary vertex with the $O^{-}$ segment on the inner limb. The $O^{+}$ on the inner limb will force both of its boundary vertices. The entire inner limb will get forced and then forcing can continue around the trunk cycles as described by Claim \ref{BaseCase1}.
    
    Case 6: Both trunk cycles will force completely, hence, both boundary vertices on both $O^{-}$ segments of the inner limb will get forced. We can reduce both trunk cycles to colored leaf cycles on the inner limb. The result is a 2-connected outerplanar graph with no trunk with an even number of leaves. Notice that exactly half of the leaves are colored. The remainder of the graph will force by Claim \ref{NoTrunk}.   

    Case 7: Both $O^{+}$ segments on the inner limb will force completely. Both boundary vertices on both trunk cycles will be forced. The inner limb now acts as an outer $O^{+}$  limb on each trunk cycle. Thus, it is as if both trunk cycles now have an equal number of $O^{+}$ and outer $O^{-}$  limbs. The remainder of the graph will be forced completely by Claim \ref{BaseCase1}.
\end{proof}

\begin{figure}
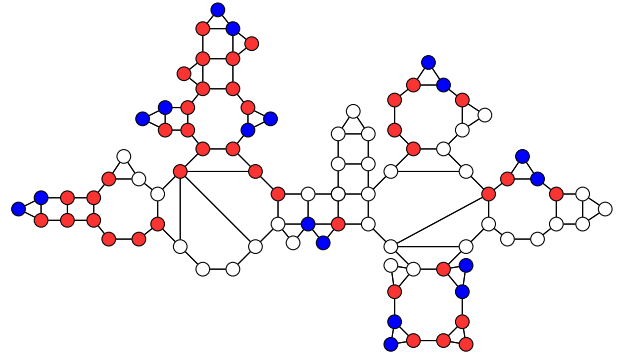
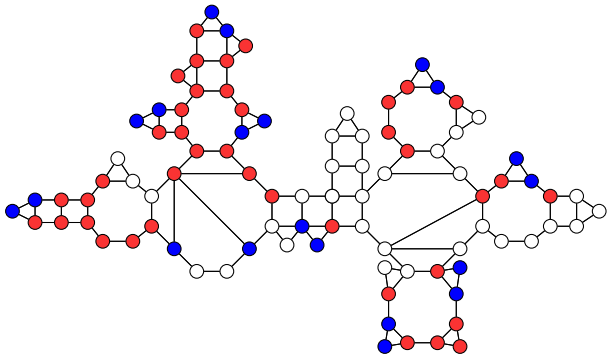
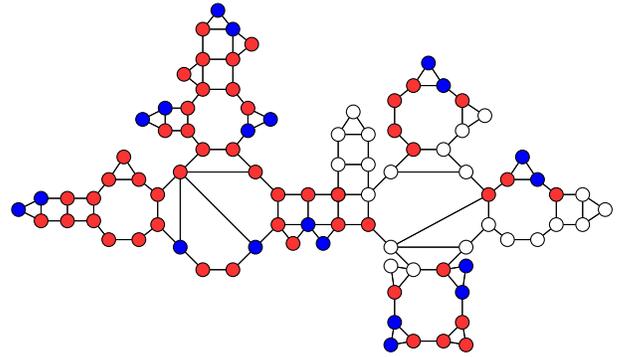
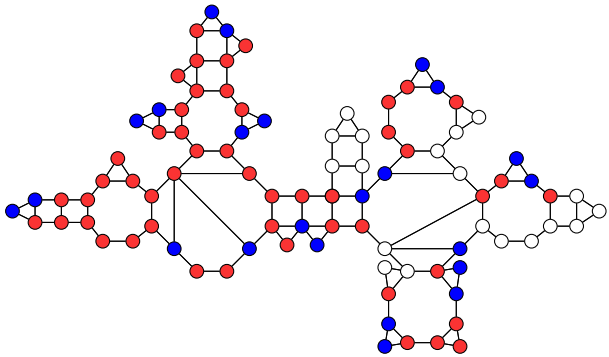
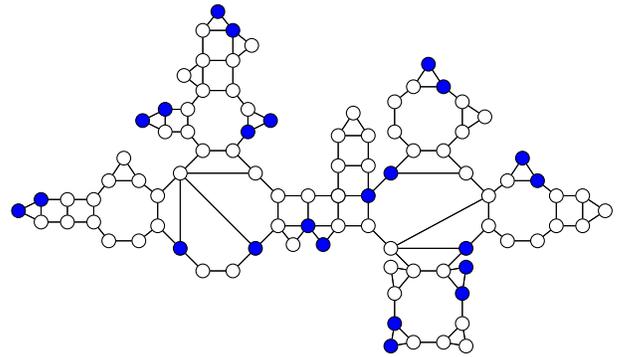

    \centering
    \begin{minipage}{.45\textwidth}
        \centering
        \ctikzfig{drawings/fg}
        \subcaption{Color every other leaf with two adjacent vertices. The trunk cycle on the left has an outer $O^{+}$ limb and an outer $E$ limb. The trunk cycle on the right has three outer $E$ limbs. The top inner limb segment is an $O^{-}$ segment and the bottom inner limb segment is an $E$ segment.}
    \end{minipage}%
    \begin{minipage}{.1\textwidth}
        \hspace{0mm}
    \end{minipage}%
    \begin{minipage}{.45\textwidth}
        \centering
        \ctikzfig{drawings/fg7stuck}
        \subcaption{The vertices on the leaves force as much as they can and are now stuck.}
    \end{minipage}%
    \vspace{1mm}
    \begin{minipage}{.45\textwidth}
        \centering
        \ctikzfig{drawings/fg8}
        \subcaption{Two vertices must be colored on the trunk on the left in order for forcing to continue along that trunk cycle.}
    \end{minipage}%
    \begin{minipage}{.1\textwidth}
        \hspace{0mm}
    \end{minipage}%
    \begin{minipage}{.45\textwidth}
        \centering
        \ctikzfig{drawings/fg14}
        \subcaption{Forcing again is halted. With the left trunk cycle and its outer limbs completely forced, this graph is now identical to forcing a graph with one trunk cycle and four $E$ outer limbs.}
    \end{minipage}%
    \vspace{1mm}
    \begin{minipage}{.45\textwidth}
        \centering
        \ctikzfig{drawings/fg14ZFSright}
        \subcaption{Three vertices must be colored on the trunk in order for forcing to go completely around the trunk cycle. The remainder of the graph will get forced.}
    \end{minipage}%
    \begin{minipage}{.1\textwidth}
        \hspace{0mm}
    \end{minipage}%
    \begin{minipage}{.45\textwidth}
        \centering
        \ctikzfig{drawings/fgZFS}
        \subcaption{This is a ZFS for our graph. This graph falls under Case 3 in the proof of Claim \ref{Inductive step}.}
    \end{minipage}
    \caption{Zero forcing a 2-connected outerplanar graph with two trunk cycles. This is the same graph that appears in Figure \ref{fig:Foliage_Trunk_Boundary}.}
    \label{fig:twotrunks}
\end{figure}

\begin{claim} \label{Inductive step}
    Let $G$ be a 2-connected outerplanar graph. Then a ZFS is produced by coloring two adjacent vertices on every other leaf cycle and coloring less than or equal to half the number of not-initially-colored vertices of the trunk.
\end{claim}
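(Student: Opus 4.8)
The plan is to induct on the number $t$ of trunk cycles of $G$. The cases $t=0$, $t=1$, and $t=2$ are exactly Claims \ref{NoTrunk}, \ref{BaseCase1}, and \ref{TwoTrunks}, so I may assume $t\geq 3$ and that the statement holds for every 2-connected outerplanar graph with fewer than $t$ trunk cycles. The first step is to locate a ``pendant'' trunk cycle. Since $G^{*}$ is a tree, contracting each trunk cycle of $G$ to a point and each limb to an edge produces a tree whose vertices are the trunk cycles of $G$; pick a leaf $C$ of that tree. Then $C$ is joined to the rest of $G$ through a single inner limb $L$, and every other limb incident to $C$ is an outer limb. Let $C'$ be the trunk cycle at the other end of $L$, and note that, since $G$ is 2-connected, $C$ and $C'$ (hence $C$ and the rest of $G$) are vertex-disjoint except along $L$.

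Next, color two adjacent vertices on every other leaf cycle of $G$ and first carry out the forcing restricted to $C$ together with its outer limbs exactly as in the proof of Claim \ref{BaseCase1}, treating the two boundary vertices of $L$ on $C$ as though they were the boundary vertices of one additional outer limb of $C$. Because $L$ separates $C$ and its outer limbs from the remainder of $G$, the fan-reversal device of Claim \ref{BaseCase1} (flipping which leaves of $C$'s outer limbs are colored, so as to reverse the direction of forcing around $C$) may be applied here without affecting any coloring choice elsewhere in $G$; moreover the colored leaves on $L$ are chosen independently of those on $C$'s outer limbs, so the flip does not change whether the $C$-facing segment of $L$ is $E$, $O^{+}$, or $O^{-}$. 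The conclusion of this step is that $C$, all its outer limbs, and the boundary vertices of $L$ on $C$ are forced, at the cost of coloring at most half of the not-initially-colored vertices of $C$.

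The heart of the argument is then a short case analysis on the two segments of $L$, mirroring the seven cases of Claim \ref{TwoTrunks}: using Lemmas \ref{lem: Odd inner} and \ref{lem: diagonal E bridge} together with the fact that $C$ has already forced both of $L$'s boundary vertices on $C$, the inner limb $L$ either (a) is forced completely and thereafter behaves as a virtual $O^{+}$ (or $E$) outer limb on $C'$, or (b) still needs one or both of its boundary vertices on $C'$ to be forced from $C'$, in which case it behaves as a virtual $O^{-}$ outer limb on $C'$. In either case, form $G'$ by deleting $C$, its outer limbs, and the interior of $L$, and attaching a single chordless cycle along the edge of $C'$ spanned by the two boundary vertices of $L$ on $C'$, with two adjacent vertices colored in case (a) and no vertex colored in case (b); then $G'$ is a 2-connected outerplanar graph with $t-1$ trunk cycles whose every-other-leaf coloring is the one inherited from $G$ (consistently, since the new cycle is colored precisely when the every-other pattern on $G$ would have colored it). Applying the induction hypothesis to $G'$ forces all of $G'$, hence all of $G$, using at most half of the not-initially-colored trunk vertices of $G'$. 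Finally, since the trunk of $G$ is the disjoint union of the trunk of $C$ and the trunk of $G'$, and (arranging as in Claim \ref{BaseCase1} that no vertex is counted as colored on both sides of the reduction) the not-initially-colored counts add, the total number of colored trunk vertices is at most half of those of $C$ plus half of those of $G'$, i.e.\ at most half the not-initially-colored trunk vertices of $G$.

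I expect the main obstacle to be the bookkeeping in case (b), where the $C'$-facing segment of $L$ is $O^{-}$: there $L$ is only partially forced until $C'$ has run its own forcing, and if $C'$ in turn depends on further trunk cycles a mutual dependency can appear. Cases 5 and 6 of Claim \ref{TwoTrunks} show such dependencies untangle for two trunk cycles, but in the general reduction one must check that an ``uncolored virtual $O^{-}$ limb'' placed on $C'$ really does get forced once $C'$ is forced, i.e.\ that no cyclic dependency among three or more trunk cycles can stall the process — this is where the tree structure of $G^{*}$ (so that the trunk cycles themselves form a tree, admitting a consistent order of resolution) is essential. A secondary technical point is the trunk-budget arithmetic when a boundary vertex of $L$ on $C'$ is also a chord end that forcing requires to be colored; this is handled exactly as in the fan and multi-chord cases of Claim \ref{BaseCase1}, by pairing each such chord end with a distinct not-initially-colored boundary vertex so that the ``at most half'' bound survives the reduction.
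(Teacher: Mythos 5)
Your overall outline (pick a pendant trunk cycle via the tree structure of $G^{*}$, split it off along its unique inner limb, mirror the seven cases of Claim~\ref{TwoTrunks}, and induct on the number of trunk cycles) is essentially the paper's, but your second step contains a genuine gap. You assert unconditionally that $C$ together with its outer limbs can be forced \emph{first}, at cost at most half of $C$'s not-initially-colored trunk vertices, by ``treating the two boundary vertices of $L$ on $C$ as though they were the boundary vertices of one additional outer limb of $C$.'' An inner limb does not behave like an outer limb of the corresponding class: an outer $E$ limb passes forcing through the trunk (Lemma~\ref{lem: OuterLimbs}), but an inner limb whose segments are, say, both $E$ forces nothing onto $C$ until a diagonal boundary vertex has been delivered from the \emph{other} trunk cycle (Lemma~\ref{lem: diagonal E bridge}); similarly an inner limb whose $C$-facing contribution is $O^{-}$-like supplies no forced boundary vertex on $C$. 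So when $C$ has equally many $O^{+}$ and $O^{-}$ outer limbs and $L$ cannot supply a forced boundary vertex on its own, $C$ stalls and can only finish \emph{after} the rest of the graph has forced $L$ from the $C'$ side --- exactly the situations handled by Cases 3, 5, and 6 of the paper's argument, where the pendant side is forced last or the two sides interleave via Lemma~\ref{lem: Odd inner}. Your subsequent dichotomy (a)/(b) is premised on ``the fact that $C$ has already forced both of $L$'s boundary vertices on $C$,'' so it inherits the same unjustified assumption rather than repairing it; your closing paragraph names the mutual-dependency worry but does not resolve it, and the trunk-budget accounting for $C$ also presupposes that step 2 succeeds. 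The paper avoids this by duplicating the inner limb into both halves ($G'$ and $G''$) and running the case analysis so that the direction of dependency between the pendant piece and the remainder is decided case by case, rather than fixing an order up front.

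A secondary, repairable point: your flip of the colored leaves on $C$'s outer limbs only (to reverse forcing direction around $C$) breaks the global every-other alternation at the junctions with $L$, and the alternation is what guarantees the $O^{+}$/$O^{-}$ oscillation used in Claim~\ref{BaseCase1}; likewise, applying the induction hypothesis to $G'$ with a prescribed colored/uncolored status for the attached virtual cycle needs a word of justification, since the statement of Claim~\ref{Inductive step} promises a ZFS only for some every-other-leaf coloring produced by the strategy, not for an arbitrary prescribed one. These could be patched, but the unconditional ``force $C$ first'' step is the substantive flaw.
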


\begin{proof}
    Suppose $G$ more than one trunk cycle. Since the weak dual graph of $G$ is a tree, there are at least two trunk cycles that are connected (via an inner limb) to exactly one other trunk cycle. Select one of these trunk cycles. We are going to split $G$ into two graphs by removing the selected trunk cycle including the inner limb it is attached to. Call this graph $G''$. Now, duplicate the inner limb and attach it to the remainder of the graph and call this graph $G'$ (see Figure \ref{fig: induct_step_trunks}). We construct the graphs in this manner to ensure that the number of trunk cycles in $G'$ is exactly one less than the number of trunk cycles of $G$. The seven cases below mimic the seven cases of Claim \ref{TwoTrunks} except now the two trunk cycles are replaced by $G'$ and $G''$. 

    Case 1: The inner limb will act as an outer $E$ limb in both $G'$ and $G''$. $G'$ and $G''$ will each have an equal number of $O^{+}$ and outer $O^{-}$  limbs. By the induction hypothesis, both will force completely. Since the inner limb acts as an outer $E$ limb, we know one of its boundary vertices will be forced by the connected trunk cycle. Since the Odd limbs oscillate around $G$, the two forced boundary vertices of the inner limb in $G$ will be diagonal to each other. By Lemma \ref{lem: diagonal E bridge}, The inner limb will force completely. Then, forcing will continue in $G$ exactly as it did in $G'$ and $G''$. 
    
    Case 2: The inner limb will act as an outer $E$ limb in both $G'$ and $G''$. $G'$ will have one more $O^{+}$ than outer $O^{-}$  limb, so it will force completely. Since we have more $O^{+}$ than $O^{-}$, both boundary vertices of the inner limb can be forced from the trunk cycle. Therefore, we can think of the inner limb as having an extra leaf (already colored) in $G''$. This will make the inner limb act as an outer $O^{+}$  limb in $G''$. Then, $G''$ will have and equal amount of $O^{+}$ and outer $O^{-}$  limbs, so it will force completely.

    Case 3: $G'$ will have an equal number of $O^{+}$ and outer $O^{-}$  limbs and will thus force completely. In $G'$ the inner limb acts as an $O^{-}$. Therefore, both boundary vertices will be forced from the trunk that the inner limb is attached to. Therefore, we can think of the inner limb as having an extra leaf (already colored) in $G''$. This will make the inner limb act as an outer $E$ limb in $G''$. $G''$ then has an equal amount of $O^{+}$ and outer $O^{-}$ limbs, so it will force completely. 

    Case 4: $G'$ will have one more $O^{+}$ than $O^{-}$ and $G''$ will have an equal amount of $O^{+}$ and $O^{-}$. By the induction hypothesis, both graphs will force completely. Since the inner limb acts as an outer $O^{+}$ limb in $G'$, we know that $G'$ could force entirely without the inner limb. Hence, we know one of the boundary vertices of the inner limb can be forced by the remainder of the graph. In $G$ we know that the $O^{+}$ segment on the inner limb will force both of its boundary vertices. With a third boundary vertex forced by the trunk cycle (that is part of $G'$), the entire inner limb will force and it will then act as an $O^{+}$ for the remainder of the graph. But this is equivalent to forcing $G''$ which we know will happen. Thus, $G$ will force entirely.  

    Case 5: $G'$ and $G''$ have an equal number of $O^{+}$ and outer $O^{-}$ limbs. By the induction hypothesis, both $G'$ and $G''$ will force completely. The inner limb acts as an outer $E$ limb on both $G'$ and $G''$. Hence, one boundary vertex will be forced by the trunk cycle and the other boundary vertex will be forced by the inner limb. In $G$ the inner limb will force by Lemma \ref{lem: diagonal E bridge}. Forcing will propagate as desired through the remainder of the graph. 

    Case 6: The inner limb acts as an $O^{-}$ outer limb in both $G'$ and $G''$. In $G$ the inner limb needs all four of its boundary vertices forced from the trunk cycles it is connected to. By the induction hypothesis, both $G'$ and $G''$ have an equal number of $O^{+}$ and outer $O^{-}$ limbs. $G'$ and $G''$ will both force entirely up to the inner limb. Hence, in $G$ all four boundary vertices will be forced. Finally, the inner limb will be forced. This completes the forcing for all of $G$.

    Case 7: The inner limb acts as an outer $O^{+}$ limb cluster on both graphs. In $G$ the inner limb will force all four of its boundary vertices, thus acting as an outer $O^{+}$ limb on both trunk cycles it is attached to. By the induction hypothesis, both graphs have an even number of $O^{+}$ and outer $O^{-}$ limbs so they will force entirely. 
\end{proof}

\begin{figure}[H]
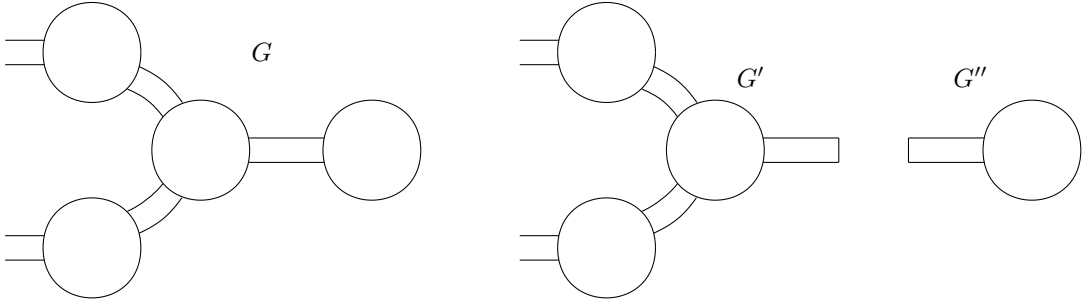

    \ctikzfig{drawings/InductiveStep_Trunks}
    \caption{$G$ is an arbitrary 2-connected outerplanar graph. The circles represent trunk cycles. The connecting pieces represent inner limbs. The outer limbs are omitted for simplicity. This shows how we split $G$ into two separate graphs $G'$ and $G''$ where both graphs include the shared inner limb which becomes an outer limb as the graphs are pulled apart.}
    \label{fig: induct_step_trunks}
\end{figure}

Proposition \ref{prop:upper_bound} follows clearly from Claim \ref{Inductive step}. We now show that Proposition \ref{prop:upper_bound} implies Theorem 2.

\begin{claim} \label{InequalityForFoliage}
    The number of leaves of the 2-connected outerplanar graph is at most $\frac{n_F - \frac{n_B}{2}}{2}$.
\end{claim}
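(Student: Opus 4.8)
The plan is to prove the claim one limb at a time and then add up. Regard each limb $L$ of $G$ (a component of the foliage) as a $2$-connected outerplanar graph in its own right, with $n(L)$ vertices, containing $\ell(L)$ of the leaf cycles of $G$ and $b(L)$ of the boundary vertices of $G$. The whole claim reduces to the per-limb estimate $n(L)\ge 2\ell(L)+\tfrac12 b(L)$, because every leaf cycle lies in exactly one limb (so $\sum_L \ell(L)=n_\ell$), and, as discussed at the end, the vertex counts reconcile so that summing over the limbs gives $n_F\ge 2n_\ell+\tfrac12 n_B$, i.e.\ $n_\ell\le \tfrac12\bigl(n_F-\tfrac12 n_B\bigr)$.

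To prove the per-limb estimate I would work with the weak dual. The weak dual $T_L$ of $L$ is the subtree of $G^*$ spanned by the faces of $L$, so $L$ has exactly $f_L-1$ interior edges, where $f_L=|V(T_L)|$, and—since $L$ is $2$-connected outerplanar—its outer boundary is a Hamilton cycle, hence exactly $n(L)$ boundary edges. Counting edge–face incidences in $L$ gives the identity $\sum_\phi|\phi|=2(f_L-1)+n(L)$, the sum being over the bounded faces $\phi$ of $L$. I would bound the left-hand side below using two facts about a face $\phi$: first $|\phi|\ge 3$; second $|\phi|\ge \deg_{G^*}(\phi)=\deg_{T_L}(\phi)+t(\phi)$, where $t(\phi)$ counts the trunk cycles adjacent to $\phi$ (every face of $G$ adjacent to a face of $L$ is either another face of $L$ or a trunk cycle, since leaf and branch cycles are never adjacent to a trunk cycle). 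A leaf cycle has $\deg_{G^*}=1$ and a branch cycle has $\deg_{G^*}=2$, so, comparing with $|\phi|\ge 3$, each leaf cycle of $L$ exceeds $\deg_{T_L}(\phi)+t(\phi)$ by at least $2$ and each branch cycle by at least $1$. Summing these lower bounds and using $\sum_\phi \deg_{T_L}(\phi)=2(f_L-1)$ yields $\sum_\phi|\phi|\ge 2(f_L-1)+s(L)+2\ell(L)$, where $s(L)=\sum_\phi t(\phi)$ is the number of edges of $G^*$ joining $L$ to the trunk; comparing with the identity above gives $n(L)\ge 2\ell(L)+s(L)$. Finally, each edge of $G^*$ joining $L$ to the trunk is an edge of $G$ whose two endpoints lie on both $L$ and a trunk cycle, hence are boundary vertices, so $b(L)\le 2s(L)$ and therefore $n(L)\ge 2\ell(L)+\tfrac12 b(L)$.

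The step I expect to require the most care is assembling the per-limb estimates into the global one, namely accounting for vertices that lie on two limbs simultaneously. Any such vertex is a cut vertex of the foliage, and one checks that a cut vertex of the foliage always lies on a trunk cycle—the faces around it must pass through a trunk cycle in order to separate the two limb-sides—so it is a boundary vertex; this is exactly the coincidence that makes the multiplicity appearing in $\sum_L n(L)$ agree with the multiplicity in $\sum_L b(L)$, so that $\sum_L\bigl(n(L)-\tfrac12 b(L)\bigr)$ still bounds $n_F-\tfrac12 n_B$ from below. Verifying that this bookkeeping works out, together with the fine points of the inequality $b(L)\le 2s(L)$, is the substantive part of the write-up; the per-limb count, which is the heart of the argument, is robust to it. Once the accounting is in place one obtains $n_F\ge 2n_\ell+\tfrac12 n_B$, which is the assertion of the claim.
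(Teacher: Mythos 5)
Your per-limb dual counting is a genuinely different route from the paper's own argument (which only sketches an injection of leaf cycles into degree-two vertices), and the per-limb inequality $n(L)\ge 2\ell(L)+s(L)$ is essentially sound, with two caveats. First, the pieces $L$ must be defined as sets of foliage faces spanning subtrees of $G^*$: a connected component of the foliage as a subgraph of $G$ need not be $2$-connected, because two foliage faces can meet in a single vertex, and then your Hamilton-cycle and chord counts fail. Second, your justification of $b(L)\le 2s(L)$ argues the wrong inclusion: showing that the two endpoints of each connecting edge are boundary vertices gives no upper bound on $b(L)$; what you need, and what is true because the interior faces incident to a vertex form a path in $G^*$, is that every boundary vertex of $L$ is an endpoint of an edge shared by a face of $L$ and a trunk face.

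The genuine gap is exactly the assembly step you deferred. Matching multiplicities do not reconcile the sums, because $b$ carries the coefficient $\tfrac12$: writing $k(v)$ for the number of pieces containing $v$, one has $\sum_L n(L)=n_F+\sum_v(k(v)-1)$ and $\tfrac12\sum_L b(L)=\tfrac12 n_B+\tfrac12\sum_v(k(v)-1)$ (every vertex with $k(v)\ge 2$ is a boundary vertex), hence $\sum_L\bigl(n(L)-\tfrac12 b(L)\bigr)=n_F-\tfrac12 n_B+\tfrac12\sum_v(k(v)-1)$, which is strictly larger than $n_F-\tfrac12 n_B$ whenever two pieces share a vertex; the inequality you need goes the other way. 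This is not a repairable bookkeeping issue, because such sharing occurs and in exactly those configurations the stated inequality can fail. Take the $9$-vertex graph with outer cycle $v,p_1,p,p_2,a,c,r_2,r,r_1$ and chords $vp$, $pa$, $va$, $vc$, $cr$, $vr$: the triangle $vac$ is the unique trunk face, the limb triangles $vpa$ and $vcr$ (which share the vertex $v$) each carry two leaf triangles, and one computes $n_\ell=4$, $n_F=9$, $n_B=3$, so $\frac{n_F-\frac{n_B}{2}}{2}=3.75<n_\ell$. So Claim \ref{InequalityForFoliage}, taken literally, needs an implicit non-degeneracy hypothesis (the same one the paper uses silently when asserting that each outer limb has exactly two boundary vertices, and which its own injection sketch also glosses over); your write-up would have to isolate and prove, or assume, that distinct foliage pieces and distinct attachments to the trunk do not share vertices, rather than expect the accounting to go through in general.
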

\begin{proof}
     We create an injective function between the leaves and half the vertices of the graph by mapping a leaf to one of the degree two vertices of the cycle for that leaf.  Each cycle has at least three vertices, and two leaf cycles can overlap in at most one vertex by outerplanarity if it has more than 4 vertices. Furthermore, all leaves are part of a limb (by definition of a limb) and each boundary edge will not have a leaf, so one of its vertices is unnecessary in the injection created.
\end{proof}

\begin{claim} \label{TightBoundsToHalf}
If $G$ is a 2-connected outerplanar graph, then 
\[2\ceil{\frac{n_\ell}{2}}+\frac{n_T-\frac{n_B}{2}}{2} \leq \frac{n}{2}+1.\]
\end{claim}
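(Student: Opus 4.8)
The plan is to chain the two structural facts already in hand — Proposition 3, which gives $n = n_F + n_T - n_B$, and Claim \ref{InequalityForFoliage}, which gives $n_\ell \le \tfrac{1}{2}\bigl(n_F - \tfrac{n_B}{2}\bigr)$ — together with the elementary ceiling estimate $2\ceil{n_\ell/2} \le n_\ell + 1$, valid for every nonnegative integer $n_\ell$ (with equality exactly when $n_\ell$ is odd). No new structural input about outerplanar graphs is needed beyond what those two results already encode.

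First I would apply the ceiling estimate to the left-hand side, bounding it above by $n_\ell + 1 + \tfrac{1}{2}\bigl(n_T - \tfrac{n_B}{2}\bigr)$. Then I would substitute the bound of Claim \ref{InequalityForFoliage} for $n_\ell$, so that the quantity is at most
\[ \frac{n_F - \frac{n_B}{2}}{2} + \frac{n_T - \frac{n_B}{2}}{2} + 1. \]
The two fractions combine — the two copies of $-\tfrac{n_B}{2}$ summing to $-n_B$ — into $\tfrac{1}{2}(n_F + n_T - n_B) + 1$, and Proposition 3 lets me replace $n_F + n_T - n_B$ by $n$. This yields $\tfrac{n}{2} + 1$, which is exactly the claimed bound.

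I do not expect a real obstacle here: every step is either an invocation of a previously established identity or inequality or the one-line ceiling bound, and the only points needing a moment's care are getting the two $-\tfrac{n_B}{2}$ terms to combine correctly and applying the ceiling inequality in the right direction. The one conceptual remark worth making is that the ``$+1$'' is precisely the loss incurred by the ceiling step when $n_\ell$ is odd; reconciling this estimate with the sharper statement $Z(G)\le n/2$ of Theorem \ref{thm:ubound} will require squeezing that unit of slack out elsewhere (for instance from the inequality in Claim \ref{InequalityForFoliage}, or from integrality of $Z(G)$ together with the parity of $n$), which is presumably the content of the argument deducing Theorem \ref{thm:ubound} from Proposition \ref{prop:upper_bound} and this claim.
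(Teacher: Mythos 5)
Your proof is correct and follows essentially the same route as the paper: Claim \ref{InequalityForFoliage} plus the identity $n = n_F + n_T - n_B$, with the only cosmetic difference being that the paper splits by parity of $n_\ell$ (obtaining $\frac{n}{2}$ in the even case and $\frac{n}{2}+1$ in the odd case) instead of using the uniform bound $2\ceil{n_\ell/2}\le n_\ell+1$. Your closing remark is on target: the paper's proof of Theorem \ref{thm:ubound} does exploit exactly that even-case sharpness and then handles the odd case separately.
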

\begin{proof}
Suppose $n_\ell$ is even. Then $2\ceil{\frac{n_\ell}{2}} = n_\ell$ and so by Claim \ref{InequalityForFoliage},
\[2\ceil{\frac{n_\ell}{2}}+\frac{n_T-\frac{n_B}{2}}{2} 
\leq n_\ell +\frac{n_T-\frac{n_B}{2}}{2}
\leq \frac{n_F - \frac{n_B}{2}}{2} +\frac{n_T-\frac{n_B}{2}}{2}
= \frac{n_F + n_T - n_B}{2}
= \frac{n}{2}.\]

Now, suppose $n_\ell$ is odd. Then $2\ceil{\frac{n_\ell}{2}} = n_\ell + 1$ so 
\[2\ceil{\frac{n_\ell}{2}}+\frac{n_T-\frac{n_B}{2}}{2} 
\leq n_\ell + 1 +\frac{n_T-\frac{n_B}{2}}{2}
\leq \frac{n_F - \frac{n_B}{2}}{2} +\frac{n_T-\frac{n_B}{2}}{2} + 1
= \frac{n_F + n_T - n_B}{2} + 1
= \frac{n}{2} + 1.\]

\end{proof} 

\noindent \textbf{Theorem 2}. \textit{If $G$ is a 2-connected outerplanar graph on $n$ vertices, then \[Z(G)\leq \frac{n}{2}.\]}

\noindent\textit{Proof}. From Proposition \ref{prop:upper_bound} we know that 
\begin{equation}
    Z(G) \leq 2\ceil{\frac{n_\ell}{2}}+\frac{n_T-\frac{n_B}{2}}{2}.
\end{equation}
By Claim \ref{TightBoundsToHalf}, this is bounded above by $\frac{n}{2}+1$.  Examining the proof of Claim \ref{TightBoundsToHalf}, for graphs with $n_\ell$ even we actually have \begin{equation}
    Z(G) \leq \frac{n}{2}
\end{equation} 
and the plus 1 is only necessary for graphs with $n_\ell$ odd. We will examine this case more closely.
For $n_\ell = \frac{n_F - \frac{n_B}{2}}{2}$ to be true, all leaf cycle must be triangles and there cannot be any branch cycles. Furthermore, each leaf cycles must share two of its vertices with other leaf cycles. This is what we call a sun graph (see Figure \ref{fig:sun}). When a sun graph has $n_\ell$ odd, a ZFS can be achieved by coloring $2\ceil{\frac{n_\ell}{2}} - 1$ of the vertices. Therefore, for a sun graph $G$ with $n_\ell$ odd we have 
\begin{equation}
    Z(G) = 2\ceil{\frac{n_\ell}{2}} - 1 
    = n_\ell 
    = \frac{n}{2}.
\end{equation}
Suppose we have a 2-connected outerplanar graph $G$ with no trunk cycle and $n_\ell$ odd. By Claims \ref{NoTrunk} and \ref{TightBoundsToHalf} we know that $Z(G) \leq 2\ceil{\frac{n_\ell}{2}} \leq \frac{n}{2} + 1$. If our graph $G$ is not a sun graph, then there must be either some leaf cycle that is not a triangle, a branch cycle, or an added vertex on a limb cycle. In any of these cases, this extra vertex does not need to be added to our ZFS. Therefore, we achieve $Z(G) \leq \frac{n}{2}$.

For any 2-connected outerplanar graph $G$ with $n_\ell$ odd either all of the limbs are near sun graphs or they are not. If they are, then we can remove one vertex from our ZFS on one leaf of an odd-leafed limb which ensures that $Z(G) \leq \frac{n}{2}$. If there is some limb that is not a sun graph, then there will be some extra vertex on the limb that does not need to be colored which guarantees that $n_\ell < \frac{n_F - \frac{n_B}{2}}{2}$ hence $Z(G) \leq \frac{n}{2}$. \qed

\section{Sharpness of Results}

We consider a family of graphs which we call \emph{sun graphs}. The sun graph $S_n$ is a graph on $2n$ vertices constructed by taking a cycle with vertices $v_1, \ldots, v_k$ and add vertices $u_1, \ldots u_n$ where by $u_i$ is adjacent to $v_i$ and $v_{i+1}$ (or $v_k$ and $v_1$ in the case of $u_1$). See Figure \ref{fig:sun} for $S_7$ and $S_8$.
We note that sun graphs are 2-connected outerplanar graphs for which $n_\ell$ is equal to $\frac{n}{2}$.  \begin{proposition}
    For any sun graph $S_k$ on $n = 2k$ vertices, we have
    $Z(S_k) = n_\ell = k = \frac{n}{2}$.
\end{proposition}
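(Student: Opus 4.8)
The plan is to establish the matching bounds $Z(S_k)\ge k$ and $Z(S_k)\le k$ separately, after which the identities $n=2k$ and $n_\ell=k$ (the latter computed below) finish the proof.

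For the lower bound I would first determine the weak dual $S_k^{*}$. In the standard outerplanar drawing the bounded faces of $S_k$ are the $k$ triangles $u_iv_iv_{i+1}$ (indices modulo $k$) together with the central $k$-cycle $v_1v_2\cdots v_k$; the triangle on $u_i$ meets the central face in exactly the edge $v_iv_{i+1}$, and two distinct triangles are edge-disjoint. Hence $S_k^{*}$ is the star $K_{1,k}$, so $n_\ell=k$, and Theorem~\ref{thm:lower_bound} gives $Z(S_k)\ge n_\ell=k$.

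For the upper bound I would exhibit an explicit zero forcing set of size $k$, namely $B=\{v_1\}\cup\{u_1,\dots,u_{k-1}\}$. The verification is a single chain of forces: $u_1$ has neighbours only $v_1\in B$ and $v_2$, so it forces $v_2$; then $u_2$, whose neighbours are $v_2$ (now blue) and $v_3$, forces $v_3$; inductively $u_i$ forces $v_{i+1}$ for $i=1,\dots,k-1$, turning all of $v_1,\dots,v_k$ blue. The only remaining white vertex is $u_k$, and $v_k$—whose four neighbours are $v_{k-1},v_1,u_{k-1},u_k$—then has $u_k$ as its unique white neighbour, so $v_k$ forces $u_k$ and the whole graph is blue. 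Thus $Z(S_k)\le|B|=k$, and together with the lower bound $Z(S_k)=k=\tfrac{n}{2}=n_\ell$.

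The forcing argument for the upper bound is routine once $B$ is written down; the only step needing a little care is the weak-dual computation in the lower bound, specifically checking that the central region is a single face and that distinct leaf triangles are pairwise edge-disjoint, both of which rely on the $u_i$ being distinct degree-two vertices attached to consecutive cycle vertices. As an alternative to the explicit set one could derive the upper bound from Proposition~\ref{prop:upper_bound} (here $n_T=n_B=0$, so it yields $Z(S_k)\le 2\lceil k/2\rceil$) together with the sun-graph refinement used in the proof of Theorem~\ref{thm:ubound} to shave off the extra vertex when $k$ is odd, but the explicit construction above is shorter and fully self-contained.
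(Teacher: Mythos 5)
Your proof is correct, but the upper bound is obtained by a genuinely different route than the paper's. The paper's proof is a two-line citation: $Z(S_k)\ge n_\ell$ from Theorem~\ref{thm:lower_bound} and $Z(S_k)\le \frac{n}{2}=k=n_\ell$ from Theorem~\ref{thm:ubound}. You use the same lower bound, but instead of invoking Theorem~\ref{thm:ubound} you exhibit the explicit zero forcing set $\{v_1,u_1,\dots,u_{k-1}\}$ and verify the forcing chain directly; the chain checks out ($u_i$ forces $v_{i+1}$ for $i=1,\dots,k-1$, then $v_k$ forces $u_k$), and your weak-dual computation $S_k^{*}\cong K_{1,k}$, hence $n_\ell=k$, is also right and is a detail the paper leaves implicit. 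What the paper's approach buys is brevity, since all the machinery of Theorem~\ref{thm:ubound} is already in place; what yours buys is self-containment and a concrete optimal set, and it has the additional virtue of independently verifying the assertion made inside the proof of Theorem~\ref{thm:ubound} that an odd sun graph can be forced with $2\lceil n_\ell/2\rceil-1$ vertices, so there is no appearance of circularity in the odd case (note that your alternative route via Proposition~\ref{prop:upper_bound} alone would only give $2\lceil k/2\rceil$, which is why the explicit construction, or the paper's sun-graph refinement, is needed when $k$ is odd).
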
  

\begin{proof}
By Theorems \ref{thm:lower_bound} and \ref{thm:ubound} we have $Z(S_k) \ge n_\ell$ and  $Z(S_k) \le n_\ell$ respectively.
\end{proof}

This shows that equality can be achieved in both Theorem \ref{thm:lower_bound} and Theorem \ref{thm:ubound}, and hence these theorems cannot be improved for general 2-connected outerplanar graphs.

\begin{figure}[H]
    \centering
    \begin{minipage}{.45\textwidth}
       \centering
        \ctikzfig{drawings/sunZFS}
        \subcaption{ZFS for the sun graph $S_8$ with 8 leaf cycles.} 
    \end{minipage}%
    \begin{minipage}{0.1\textwidth}
        \hspace{0mm}
    \end{minipage}%
    \begin{minipage}{.45\textwidth}
        \centering
        \ctikzfig{drawings/sun7ZFS}
        \subcaption{ZFS for the sun graph $S_7$ with 7 leaf cycles.}
    \end{minipage}
    \caption{}
    \label{fig:sun}
\end{figure}


\section{Conclusion} \label{sec:conclusion}

Theorem \ref{thm:ubound} demonstrates that 2-connected outerplanar graphs on $n$ vertices have $Z(G) \le \frac{n}{2}$. A natural question remains regarding how this result would generalize to planar graphs. It is worth noting that without any restriction, a planar graph can have $Z(G)$ close to $n$; take, for instance, the star graph on $n$ vertices. If allowing 2-connected planar graphs, the complete bipartite graph $K_{2,m}$ is planar has $Z(G) = m = n-2$. Hence, there are several ways one could consider generalizing our results to planar graphs with various restrictions. First is to consider maximal planar graphs, generalizing the work in \cite{hernandez2019zero} to planar graphs. Another, is to consider 3-connected planar graphs (in contrast to 2-connected outerplanar graphs). Finally, one could consider planar 3-trees (in contrast to maximal 2-connected outerplanar graphs which are 2-trees).

We do not attempt to answer any of these generalizations. However, we compute values for $Z(G)$ for smaller relevant planar graphs. The results of these computations are not sufficient to make a concrete conjecture, but do support the idea that $Z(G)$ is much less than $n$ for these restricted classes of planar graphs. The values in Tables \ref{tab.maximalplanar} and \ref{tab.planar3connected} indicate that for different classes of planar graphs, the value of $Z(G)$ is far less than the number of vertices.  This computation considers graphs from the {\it Wolfram} database \cite{Mathematica}; for emphasis, it is by no means complete or exhaustive. Table \ref{tab.maximalplanar} computes the zero forcing number for maximal planar graphs (i.e., for $n \ge 3$, planar graphs with $3n-6$ edges) and Table \ref{tab.planar3connected} computes $Z(G)$ for 3-connected planar graphs. A na\"ive look at small cases for each of these questions would indicate that perhaps $Z(G) \le 1/2 n + k$ for some constant $k$; however, the larger graphs (from 9 to 22 vertices) suggest that the 1/2 can be improved, perhaps to 1/3.

Most concretely, we ask the following:

\begin{question}
Consider maximal planar graphs, 3-connected planar graphs or planar 3-trees. For all such graphs $G$, is there a constant $\alpha < 1$ and a constant $k$ such that $Z(G) \le \alpha n + k$?
\end{question}

\begin{table}[H]
\centering 

$\begin{array}{|c|c|c|}
\hline
 \text{Graph Name} & n & Z(G) \\ \hline
 \text{FritschGraph} & 9 & 5 \\
 \{\text{JohnsonSkeleton},17\} & 10 & 5 \\
 \{\text{SierpinskiTetrahedron},2\} & 10 & 6 \\
 \text{GoldnerHararyGraph} & 11 & 5 \\
 \text{IcosahedralGraph} & 12 & 6 \\
 \text{SmallTriakisOctahedralGraph} & 14 & 6 \\
 \text{TetrakisHexahedralGraph} & 14 & 6 \\
 \text{PoussinGraph} & 15 & 6 \\
 \{\text{Apollonian},3\} & 16 & 7 \\
 \text{ErreraGraph} & 17 & 7 \\ \hline
\end{array}
$

\caption{Select maximal planar graphs on 9 to 22 vertices from the {\it Wolfram} database and their zero forcing numbers.}

\label{tab.maximalplanar}


\end{table}

\begin{table}[H]
\centering 

$\begin{array}{|c|c|c|}
\hline
 \text{Graph Name} & n & Z(G) \\ \hline
 \text{FritschGraph} & 9 & 5 \\
 \text{$\{$JohnsonSkeleton, 10$\}$} & 9 & 5 \\
 \text{$\{$King, $\{$3, 3$\}\}$} & 9 & 5 \\
 \text{$\{$Quartic, $\{$9, 5$\}\}$} & 9 & 5 \\
 \text{$\{$JohnsonSkeleton, 17$\}$} & 10 & 5 \\
 \text{$\{$JohnsonSkeleton, 86$\}$} & 10 & 5 \\
 \text{GoldnerHararyGraph} & 11 & 5 \\
 \text{HerschelGraph} & 11 & 5 \\
 \text{$\{$JohnsonSkeleton, 9$\}$} & 11 & 5 \\
 \text{$\{$JohnsonSkeleton, 11$\}$} & 11 & 5 \\
 \text{$\{$JohnsonSkeleton, 87$\}$} & 11 & 5 \\
 \text{$\{$Quartic, $\{$11, 61$\}\}$} & 11 & 5 \\
 \text{$\{$Quartic, $\{$11, 64$\}\}$} & 11 & 5 \\
 \text{$\{$Quartic, $\{$11, 120$\}\}$} & 11 & 5 \\
 \text{CuboctahedralGraph} & 12 & 6 \\
 \text{IcosahedralGraph} & 12 & 6 \\
 \text{$\{$JohnsonSkeleton, 54$\}$} & 13 & 4 \\
 \text{RhombicDodecahedralGraph} & 14 & 6 \\
 \text{SmallTriakisOctahedralGraph} & 14 & 6 \\
 \text{TetrakisHexahedralGraph} & 14 & 6 \\
 \text{$\{$JohnsonSkeleton, 89$\}$} & 14 & 6 \\
 \text{$\{$JohnsonSkeleton, 91$\}$} & 14 & 6 \\
 \text{PoussinGraph} & 15 & 6 \\
 \text{$\{$JohnsonSkeleton, 22$\}$} & 15 & 6 \\
 \text{$\{$JohnsonSkeleton, 65$\}$} & 15 & 6 \\
 \text{$\{$Apollonian, 3$\}$} & 16 & 7 \\
 \text{ErreraGraph} & 17 & 7 \\
 \text{$\{$Cubic, $\{$18, 1$\}\}$} & 18 & 6 \\
 \text{$\{$JohnsonSkeleton, 35$\}$} & 18 & 6 \\
 \text{$\{$JohnsonSkeleton, 36$\}$} & 18 & 6 \\
 \text{$\{$JohnsonSkeleton, 44$\}$} & 18 & 6 \\
 \text{$\{$JohnsonSkeleton, 92$\}$} & 18 & 6 \\
 \text{$\{$Wheel, 19$\}$} & 19 & 3 \\
 \text{$\{$JohnsonSkeleton, 6$\}$} & 20 & 7 \\
 \text{$\{$JohnsonSkeleton, 58$\}$} & 21 & 6 \\
 \text{$\{$Cubic, $\{$22, 1$\}\}$} & 22 & 6 \\
 \text{$\{$JohnsonSkeleton, 59$\}$} & 22 & 6 \\
 \text{$\{$JohnsonSkeleton, 60$\}$} & 22 & 6 \\ \hline
\end{array}$

\caption{Select 3-connected planar graphs from the {\it Wolfram} database with their zero forcing numbers. For each vertex count, only the graphs with the highest value of $Z(G)$ computed are shown. There may be other graphs with the same vertex count but a higher value of $Z(G)$. }

\label{tab.planar3connected}

\end{table}


\bibliographystyle{plain}
\bibliography{references}

\begin{thebibliography}{1}

\bibitem{butlerthrot}
Steve Butler and Michael Young.
\newblock Throttling zero forcing propagation speed on graphs.
\newblock {\em Australasian Journal of Combinatorics}, 57:65--71, 2013.

\bibitem{Chartrand1967}
Gary Chartrand and Frank Harary.
\newblock Planar permutation graphs.
\newblock {\em Annales de l'I.H.P. Probabilités et statistiques},
  3(4):433--438, 1967.

\bibitem{CDV1}
Yves Colin~de Verdi\`ere.
\newblock Sur un nouvel invariant des graphes et un crit\`ere de planarit{\'e}.
\newblock {\em Journal of Combinatorial Theory, Series B}, 50(1):11--21, 1990.

\bibitem{hernandez2019zero}
Gregorio Hern{\'a}ndez, Jos{\'e} Ranilla, and Sandra Ranilla-Cortina.
\newblock Zero forcing in triangulations.
\newblock {\em Journal of Computational and Applied Mathematics}, 354:123--130,
  2019.

\bibitem{Mathematica}
Wolfram~Research{,} Inc.
\newblock Mathematica, {V}ersion 13.3, 2023.

\bibitem{proskurowski1981minimum}
Andrzej Proskurowski and Maciej~M Sys{\l}o.
\newblock Minimum dominating cycles in outerplanar graphs.
\newblock {\em International Journal of Computer \& Information Sciences},
  10(2):127--139, 1981.

\bibitem{row2012technique}
Darren~D Row.
\newblock A technique for computing the zero forcing number of a graph with a
  cut-vertex.
\newblock {\em Linear Algebra and its Applications}, 436(12):4423--4432, 2012.

\bibitem{work2008zero}
AIM Minimum Rank-Special~Graphs Work et~al.
\newblock Zero forcing sets and the minimum rank of graphs.
\newblock {\em Linear Algebra and its Applications}, 428(7):1628--1648, 2008.

\end{thebibliography}

\end{document}